\definecolor {processblue}{cmyk}{0.96,0,0,0}
\tikzset{    vertex/.style={circle,draw,minimum size=1.5em},    edge/.style={->,> = latex'}}
\newtheorem{theorem}{Theorem}[section]
\newtheorem{cor}[theorem]{Corollary}
\newtheorem{lemma}[theorem]{Lemma}
\newtheorem{conjecture}[theorem]{Conjecture}
\newtheorem{theo}[theorem]{Theorem}
\newtheorem{lem}[theorem]{Lemma}
\newtheorem{pro}[theorem]{Proposition}
\newtheorem{rem}[theorem]{Remark}
\newtheorem{exa}[theorem]{Example}
\newtheorem{que}[theorem]{Question}
\newtheorem{Definition}[theorem]{Definition}
\newtheorem*{Definition*}{Definition}
\def\qed{\hfill \ifhmode\unskip\nobreak\fi\quad\ifmmode\Box\else$\Box$\fi\\ }
\begin{document}

\title[Almost complex torus manifolds]{Almost complex torus manifolds - graphs, Hirzebruch genera, and problem of Petrie type}
\author{Donghoon Jang}
\thanks{Donghoon Jang was supported by the National Research Foundation of Korea(NRF) grant funded by the Korea government(MSIT) (2021R1C1C1004158).}
\address{Department of Mathematics, Pusan National University, Pusan, South Korea}
\email{donghoonjang@pusan.ac.kr}

\begin{abstract}
Let a $k$-dimensional torus $T^k$ act on a $2n$-dimensional compact connected almost complex manifold $M$ with isolated fixed points. As for circle actions, we show that there exists a (directed labeled) multigraph that encodes weights at the fixed points of $M$. This includes the notion of a GKM graph as a special case that weights at each fixed point are pairwise linearly independent. If in addition $k=n$, i.e., $M$ is an almost complex torus manifold, the multigraph is a graph; it has no multiple edges. 

We show that the Hirzebruch $\chi_y$-genus $\chi_y(M)=\sum_{i=0}^n a_i(M) \cdot (-y)^i$ of an almost complex torus manifold $M$ satisfies $a_i(M) > 0$ for $0 \leq i \leq n$. In particular, the Todd genus of $M$ is positive and there are at least $n+1$ fixed points.

Petrie's conjecture asserts that if a homotopy $\mathbb{CP}^n$ admits a non-trivial circle action, its Pontryagin class agrees with that of $\mathbb{CP}^n$. Petrie proved this conjecture if instead it admits a $T^n$-action. We prove that if a $2n$-dimensional almost complex torus manifold $M$ only shares the Euler number with the complex projective space $\mathbb{CP}^n$, an associated graph agrees with that of a linear $T^n$-action on $\mathbb{CP}^n$; consequently $M$ has the same weights at the fixed points, Chern numbers, equivariant cobordism class, Hirzebruch $\chi_y$-genus, Todd genus, and signature as $\mathbb{CP}^n$. If furthermore $M$ is equivariantly formal, the equivariant cohomology and the Chern classes of $M$ and $\mathbb{CP}^n$ also agree.
\end{abstract}

\maketitle

\tableofcontents

\section{Introduction and statements of main results}

$\indent$

An \textbf{almost complex manifold} is a manifold $M$ with a bundle map $J:TM \to TM$, called an \textbf{almost complex structure}, which restricts to a linear complex structure on each tangent space. A \textbf{unitary manifold} (also called a \textbf{stable complex manifold} or a \textbf{weakly almost complex manifold}) is a manifold $M$ with a complex structure $J$, called a \textbf{unitary structure} (also called a \textbf{stable complex structure} or a \textbf{weakly almost complex structure}), on the vector bundle $TM \oplus \underline{\mathbb{R}}^k$ for some non-negative integer $k$, where $\underline{\mathbb{R}}^k$ denotes the $k$-dimensional trivial bundle over $M$. Every complex manifold is almost complex, and every symplectic manifold admits a compatible almost complex structure.

Let $M$ be a $2n$-dimensional unitary manifold. Let a $k$-dimensional torus $T^k$ act on $M$. Throughout this paper, any group action on a unitary (almost complex) manifold is assumed to preserve the unitary (almost complex) structure, that is, $dg \circ J=J \circ dg$ for all elements $g$ of the group. Let $F$ be a fixed component of the action. Let $\dim F=2m$ and let $p$ be a point in $F$. The normal space $N_pF$ of $N$ at $p$ decomposes into the sum of $n-m$ complex $1$-dimensional vector spaces $L_1$, $\cdots$, $L_{n-m}$, where on each $L_i$ the torus $T^k$ acts by multiplication by $g^{w_{p,i}}$ for all $g \in T^k$, for some non-zero element $w_{p,i}$ of $\mathbb{Z}^k$, $1 \leq i \leq n-m$. These elements $w_{p,1}$, $\cdots$, $w_{p,n-m}$ are the same for all $p \in F$ and called \textbf{weights} of $F$.

For a circle action on a compact almost complex manifold $M$ with isolated fixed points, there exists a multigraph which encodes weights at the fixed points of $M$ (see for instance \cite{GS}, \cite{JT}), and it is useful to consider such a multigraph to study an almost complex $S^1$-manifold with isolated fixed points. In this paper, we show that as for circle actions, for a torus action there also is a multigraph encoding the fixed point data of a manifold. We first generalize the notion of a multigraph for a circle action to that for a torus action.

\begin{Definition} \label{d11}
A \textbf{labeled directed $k$-multigraph} $\Gamma$ is a set $V$ of vertices, a set $E$ of
edges, maps $i : E \to V$ and $t : E \to V$ giving the initial and terminal vertices of
each edge, and a map $w:E \to \mathbb{Z}^k$ ($\mathbb{N}^+$ if $k=1$) giving the label of each edge.
\end{Definition}

Let $w=(w_1,\cdots,w_k)$ be an element in $\mathbb{Z}^k$. By $\ker w$ we shall mean the subgroup of $T^k$ whose elements fix $w$. That is,
\begin{center}
$\ker w=\{g=(g_1,\cdots,g_k) \in T^k \subset \mathbb{C}^k \, | \, g^w:=g_1^{w_1} \cdots g_k^{w_k}=1\}$.
\end{center}
For an action of a group $G$ on a manifold $M$, we denote its fixed point set by $M^G$, i.e., $M^G=\{m \in M\, | \, g \cdot m=m, \forall g \in G\}$.

\begin{Definition} \label{d12}
Let a $k$-dimensional torus $T^k$ act on a compact almost complex manifold $M$ with isolated fixed points. We say that a (labeled directed $k$-)multigraph $\Gamma=(V,E)$ \textbf{describes} $M$ if the following hold:
\begin{enumerate}[(i)]
\item The vertex set $V$ is equal to the fixed point set $M^{T^k}$.
\item The multiset of the weights at $p$ is $\{w(e) \, | \, i(e)=p\} \cup \{-w(e) \, | \, t(e)=p\}$ for all $p \in M^{T^k}$.
\item For each edge $e$, the two endpoints $i(e)$ and $t(e)$ are in the same component of the isotropy submanifold $M^{\ker w(e)}$.
\end{enumerate}
\end{Definition}

Definition \ref{d12} means that for an edge $e$, a fixed point $i(e)$ has weight $w(e)$ and a fixed point $t(e)$ has weight $-w(e)$.

\begin{pro} (Proposition \ref{p22}) \label{p13}
Let a $k$-dimensional torus $T^k$ act on a compact almost complex manifold $M$ with isolated fixed points. There exists a (labeled directed $k$-)multigraph $\Gamma$ describing $M$ that has no self-loops. 
\end{pro}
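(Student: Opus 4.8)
The plan is to reduce the torus case to the circle case, which is already available (\cite{GS}, \cite{JT}), by cutting $M$ along the isotropy submanifolds determined by the weight directions. Fix a nonzero weight $v$ occurring at some fixed point and write $v=aw$ with $w\in\mathbb{Z}^k$ primitive and $a\in\mathbb{Z}\setminus\{0\}$; since $M^{\ker w}=M^{\ker(-w)}$, I work with one representative $w$ of each primitive line. Because $w$ is primitive, the homomorphism $\phi_w\colon T^k\to S^1$, $g\mapsto g^w$, is surjective with connected kernel $\ker w$ of dimension $k-1$, so $T^k/\ker w\cong S^1$. Each connected component $Z$ of the isotropy submanifold $M^{\ker w}$ is then a compact connected almost complex manifold on which this circle acts, preserving the almost complex structure, with fixed point set $Z\cap M^{T^k}$, which is isolated. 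The first step I would verify is the weight bookkeeping: at a point $p\in Z$ the tangent weights of $Z$ are exactly those $T^k$-weights of $M$ at $p$ lying in the annihilator of $\ker w$, which by primitivity of $w$ is the line $\mathbb{Z} w$; thus they are precisely the weights $aw$ of $M$ at $p$, and under $\phi_w$ such a weight becomes the circle weight $a$.

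Next I would invoke the circle result on each $Z$: there is a self-loop-free (labeled directed) multigraph $\Gamma_Z$ describing the circle action on $Z$ in the sense of Definition \ref{d12}, whose labels are positive integers. I convert $\Gamma_Z$ into a piece of the desired graph $\Gamma$ by keeping the same vertices and edges and replacing each circle label $a$ on an edge $e$ by the $T^k$-label $aw\in\mathbb{Z}^k$, retaining the orientation so that $i(e)$ carries weight $aw$ and $t(e)$ carries weight $-aw$. Taking the union of these pieces over all primitive directions $w$ and all components $Z$ of $M^{\ker w}$ produces $\Gamma$ on the vertex set $M^{T^k}$.

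It then remains to check the three conditions of Definition \ref{d12} and the absence of self-loops. Condition (i) holds by construction. For (ii), every nonzero weight $v$ at $p$ lies on a unique primitive line $\mathbb{Z} w$ and is a tangent weight of the unique component of $M^{\ker w}$ through $p$; the circle graph of that component accounts for it exactly once, and the relabeling turns the circle-weight multiset at $p$ into the multiset of $T^k$-weights of $M$ at $p$ that are multiples of $w$, so summing over primitive lines recovers the full weight multiset. For (iii), the preimage under $\phi_w$ of the cyclic subgroup $\mathbb{Z}_a\subset S^1$ is exactly $\ker(aw)$, because $g^{aw}=(g^w)^a$; hence the endpoints of an edge, which lie in one component of the isotropy submanifold $Z^{\mathbb{Z}_a}=Z\cap M^{\ker(aw)}$ for the circle action, lie in one component of $M^{\ker(aw)}$ as required. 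Self-loop-freeness is inherited directly from each $\Gamma_Z$.

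The step I expect to be the genuine obstacle is the circle-case input itself, namely the existence of a self-loop-free multigraph for a circle action on a compact almost complex manifold with isolated fixed points: it encodes the nontrivial fact that within each component of an isotropy submanifold the weights $+a$ and $-a$ balance and can be matched without pairing a point to itself. In the almost complex category this cannot be read off from invariant $2$-spheres or a moment map, as in the projective or symplectic settings, and is instead obtained by flow or localization arguments in \cite{GS}, \cite{JT}. The remaining torus-specific subtlety is the annihilator computation identifying the tangent weights of $M^{\ker w}$ with the multiples of $w$, which underlies the whole reduction and must be carried out with care so that each weight is accounted for exactly once.
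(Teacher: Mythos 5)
Your proof is correct and takes essentially the same approach as the paper: both reduce to the circle-case Lemma \ref{l21} of \cite{JT} applied to each connected component of the isotropy submanifold $M^{\ker w}$, and then relabel the resulting integer edge-labels by the corresponding multiples of $w$, checking conditions (i)--(iii) of Definition \ref{d12}. The only cosmetic difference is that you let the quotient circle $T^k/\ker w$ act on each component, whereas the paper restricts a generic subcircle $S\le T^k$ with $M^S=M^{T^k}$ and converts $S$-weights $b\langle\xi,w\rangle$ back to $T^k$-weights $bw$; the bookkeeping is the same either way.
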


For an action of a $k$-dimensional torus $T^k$ on a manifold $M$, the \textbf{equivariant cohomology} of $M$ is
\begin{center}
$\displaystyle H_{T^k}^{*}(M)=H^{*}(M \times_{T^k} ET^k)$, 
\end{center}
where $ET^k$ is a contractible space on which $T^k$ acts freely. 

Let $M$ be a compact almost complex manifold equipped with an action of a torus $T^k$ that has isolated fixed points. If weights at each fixed point are pairwise linearly independent, such a manifold is called an (almost complex) GKM manifold, and a labeled multigraph so called a GKM graph is associated \cite{GKM}. The GKM graph enables us to compute the (equivariant) cohomology of a given GKM manifold if furthermore the action is equivariantly formal; the map $\iota^* :H_{T^k}^*(M) \to H^*(M)$ is surjective. Some papers include equivariant formality in the definition of a GKM manifold (e.g. \cite{GW}, \cite{Ku}), while the others add the assumption separately (e.g. \cite{GKZ}, \cite{GZ}). We note that if $M$ satisfies the GKM condition that weights at each fixed point are pairwise linearly independent, our multigraph is a GKM graph (forgetting the direction of each edge), and hence our notion of a multigraph generalizes the notion of a GKM graph to a general torus action with isolated fixed points; see Corollary \ref{c24}.

An \textbf{almost complex torus manifold} is a $2n$-dimensional compact connected almost complex manifold equipped with an effective $T^n$-action that has fixed points. Because the dimension of the torus is the half of the dimension of a manifold and there are fixed points, there are only finitely many fixed points. For an almost complex torus manifold, we show that a multigraph describing it is a graph; it has no multiple edges.

\begin{pro} (Proposition \ref{p28}) \label{p14}
Let $M$ be an almost complex torus manifold. Then there is a graph describing $M$ that has no self-loops. In other words, there is a multigraph describing $M$ that has no multiple edges and no self-loops.
\end{pro}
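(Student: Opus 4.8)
The plan is to start from the multigraph with no self-loops provided by Proposition \ref{p13} and to show that, under the extra hypothesis $k=n$, it automatically has no multiple edges. The one new ingredient supplied by $k=n$ is that the weights at each fixed point are linearly independent. Indeed, at a fixed point $p$ the isotropy representation of $T^n$ on $T_pM\cong\mathbb{C}^n$ has weights $w_1,\dots,w_n\in\mathbb{Z}^n$, and its kernel is $\bigcap_i\ker w_i$. If the $w_i$ failed to span $\mathbb{R}^n$, this intersection would contain a positive-dimensional subtorus $S$ acting trivially on $T_pM$; since the action is linearizable near $p$, $S$ would fix a neighborhood of $p$, hence (as $M^S$ is a closed submanifold of the connected manifold $M$) all of $M$, contradicting effectiveness. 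Thus $w_1,\dots,w_n$ form a basis of $\mathbb{Q}^n$; in particular no two of them are parallel.

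Next I would record the local geometric consequence. Fix an edge $e$ with label $w=w(e)$ and $i(e)=p$, and let $N$ be the connected component of $M^{\ker w}$ containing $p$. Its tangent space at $p$ is the sum of those lines $L_i$ on which $\ker w$ acts trivially, i.e. those with $w_i\in\mathbb{Z}w$; by linear independence the only such weight is $w$ itself, so $\dim_{\mathbb{C}}N=1$. Hence $N$ is a compact connected almost complex surface (so oriented) on which the residual circle $T^n/\ker w\cong S^1$ acts nontrivially with a fixed point, which forces $N\cong S^2$ with exactly two fixed points, its two poles, carrying weights $w$ and $-w$. By condition (iii) of Definition \ref{d12} the terminal vertex $t(e)$ lies in $N$, and since $t(e)\neq i(e)$ (no self-loops) it must be the other pole; thus, up to direction, the edge $e$ \emph{is} the isotropy sphere $N$ joining its two poles. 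Moreover $w$ occurs with multiplicity one among the weights at $p$, so $N$ gives rise to a single edge. Consequently a multiple edge between two fixed points $p$ and $q$ (two edges with the same pair of endpoints) would produce two \emph{distinct} isotropy $2$-spheres $N,N'$ through both $p$ and $q$; since distinct spheres have distinct tangent lines at $p$, their labels satisfy $w\not\parallel w'$.

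The heart of the argument is to exclude this. I would first pass to the $4$-dimensional model $P$, the component of $M^{\ker w\cap\ker w'}$ containing $p$: exactly as above its tangent space at $p$ is the sum of the two lines carrying the weights $w,w'$, i.e. the only basis weights in $\operatorname{span}_{\mathbb{Q}}(w,w')$, so $\dim_{\mathbb{C}}P=2$, the quotient $T^2=T^n/(\ker w\cap\ker w')$ acts effectively on $P$, and both $N,N'\subseteq P$ since each is connected, contains $p$, and is fixed by $\ker w\cap\ker w'$. In $P$ the weights at $p$ are a basis $\{a,b\}$ of $\mathbb{Q}^2$ (the images of $w,w'$), and because $q$ is the opposite pole of both spheres its two weights are exactly $\{-a,-b\}$. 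Now let $C\cong S^1$ be the identity component of $\{g\in T^2:g^{a}=1\}$, so that $N$ is a connected component of $P^{C}$ and $C$ fixes $N$ pointwise. The normal bundle of $N$ in $P$ is then a $C$-equivariant complex line bundle on which $C$ acts fiberwise by a single character, constant because $N$ is connected; evaluating this character at the two poles gives $b|_{C}$ at $p$ and $(-b)|_{C}$ at $q$, whence $b|_{C}=-b|_{C}$. But $b$ is not a rational multiple of $a$, so $b|_{C}\neq 0$, a contradiction. Therefore no two distinct isotropy $2$-spheres share both poles, the graph of Proposition \ref{p13} has no multiple edges, and this proves the statement.

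The main obstacle is precisely this last exclusion; the preceding reductions are routine once linear independence of the weights is in hand. The one point requiring care is that a weight $w$ need not be primitive, so $\ker w$ may be disconnected; I would handle this by working throughout with the \emph{identity component} of the relevant kernel (as in the definition of $C$ above), which is all that the normal-bundle weight-constancy argument needs, and by noting that the tangent-space computations only use the rational span of the weights and so are unaffected.
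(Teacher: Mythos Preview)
Your proof is correct, but it follows a genuinely different route from the paper's.

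The paper's argument stays inside the full manifold $M$ and applies Lemma~\ref{l27} as a black box: since $p$ and $q$ lie in the same component of $M^{\ker w_{p,1}}$, the full weight multisets at $p$ and at $q$ are congruent modulo $w_{p,1}$, giving a bijection $\pi$ on $\{1,\dots,n\}$. The paper then uses the stronger fact that the weights form a \emph{$\mathbb{Z}$-basis} of $\mathbb{Z}^n$ (from effectiveness) to force $\pi(1)=1$ and to rule out the two possibilities $\pi(2)=2$ (which would give $2w_{p,2}\in\mathbb{Z}w_{p,1}$) and $\pi(2)>2$ (which would make $w_{q,\pi(2)}$ a $\mathbb{Z}$-combination of $w_{q,1},w_{q,2}$). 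This is a short combinatorial argument, but it genuinely needs the integral basis property.

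Your argument instead first identifies each edge with an isotropy $2$-sphere (which the paper records separately as Proposition~\ref{p29}, \emph{after} proving Proposition~\ref{p28}), then localizes the problem to the $4$-dimensional isotropy submanifold $P$, and finally derives the contradiction from constancy of the $C$-weight on the normal line bundle of $N\subset P$. This normal-bundle constancy is the same geometric principle that underlies Lemma~\ref{l27}, but you apply it only to a single weight restricted to a single circle, and you need only $\mathbb{Q}$-linear independence of the weights rather than the $\mathbb{Z}$-basis property. In exchange, your argument carries the overhead of constructing $P$, checking $N,N'\subset P$, and tracking identity components when $w$ is imprimitive. Both approaches are clean; yours is more geometric and slightly more robust in its hypotheses, while the paper's is shorter once Lemma~\ref{l27} is in hand.
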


Geometrically, this means that given a $2n$-dimensional almost complex torus manifold, at each fixed point $p$ there are $n$ $2$-spheres equipped with rotations (restrictions of the $T^n$-action on the manifold to these $2$-spheres), all of which have $p$ as one common fixed point, but no two of them share other fixed point; see Proposition \ref{p29}. Proposition \ref{p14} implies that a (multi)graph describing a $2n$-dimensional almost complex torus manifold has at least $n+1$ vertices, hence we obtain a lower bound on the number of fixed points. 

\begin{cor} \label{c16}
Let $M$ be an almost complex torus manifold. Then there are at least $\frac{1}{2} \dim M+1$ fixed points.
\end{cor}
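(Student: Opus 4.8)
The plan is to derive Corollary \ref{c16} directly from Proposition \ref{p14}. By Proposition \ref{p14}, there exists a graph $\Gamma$ (with no multiple edges and no self-loops) describing $M$, whose vertex set $V$ equals the fixed point set $M^{T^n}$. Thus it suffices to show that this graph has at least $n+1$ vertices, where $2n=\dim M$.

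First I would examine the degree of each vertex of $\Gamma$. Fix a vertex $p \in V = M^{T^n}$. Since $M$ is a $2n$-dimensional almost complex torus manifold and $p$ is an isolated fixed point, the tangent space $T_pM$ decomposes into $n$ complex one-dimensional weight spaces, so $p$ has exactly $n$ weights (counted with multiplicity). By condition (ii) of Definition \ref{d12}, the multiset of weights at $p$ is $\{w(e) \mid i(e)=p\} \cup \{-w(e) \mid t(e)=p\}$, so the number of edges incident to $p$ (as initial or terminal vertex) equals the number of weights at $p$, namely $n$. Since $\Gamma$ has no self-loops, each such edge contributes to the count at two distinct vertices, and since $\Gamma$ has no multiple edges, each edge connects $p$ to a distinct neighboring vertex. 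Hence $p$ has exactly $n$ distinct neighbors, so $p$ together with its neighbors already accounts for $n+1$ distinct vertices.

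Therefore $|V| \geq n+1$, which is precisely the assertion that there are at least $\frac{1}{2}\dim M+1$ fixed points. The key point making this argument work is the combination of "no multiple edges" and "no self-loops" from Proposition \ref{p14}: without these, the $n$ edges at $p$ could terminate at fewer than $n$ distinct vertices (or loop back to $p$), and the count would collapse.

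I expect the main obstacle to be purely one of bookkeeping rather than genuine difficulty: one must be careful that the $n$ weights at $p$ genuinely correspond to $n$ distinct neighboring vertices. This relies essentially on Proposition \ref{p14}, and indeed the bulk of the mathematical content has already been absorbed into establishing that a describing graph with no multiple edges exists. Given that proposition, the corollary follows by this short counting argument, and no further geometric input is needed.
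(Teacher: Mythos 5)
Your proof is correct and follows essentially the same route as the paper, which deduces Corollary \ref{c16} directly from Proposition \ref{p14} by observing that in a describing graph with no self-loops and no multiple edges each fixed point has exactly $n$ edges and hence $n$ distinct neighbors, forcing at least $n+1$ vertices. (The paper also remarks that the corollary follows alternatively from Theorem \ref{t15} via $\sum_{i=0}^n a_i(M)=|M^{T^n}|$ with all $a_i(M)>0$, but your counting argument is the paper's primary justification.)
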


The bound in Corollary \ref{c16} is sharp; an effective linear $T^n$-action on $\mathbb{CP}^n$ has $n+1$ fixed points; see Example \ref{e111}. If $T^n$ acts effectivaly on a $2n$-dimensional compact unitary manifold $M$ and $M$ does not bound a unitary manifold equivariantly, Darby proved that there are at least $n+1$ fixed points, using equivariant complex bordism theory \cite{D}.

For a torus action on a compact manifold $M$, the Euler number of $M$ is equal to the sum of the Euler numbers of its fixed components \cite{Kob}. Since an almost complex torus manifold has only isolated fixed points and the Euler number of a point is $1$, we also obtain a lower bound on the Euler number. Masuda also obtained this lower bound by using the theory of multi-fans \cite{Ma2}.

\begin{cor} \label{c17}
Let $M$ be an almost complex torus manifold. Then its Euler number is at least $\frac{1}{2} \dim M+1$.
\end{cor}

For a compact almost complex manifold, the \textbf{Hirzebruch $\chi_y$-genus} is the genus of the power series $\frac{x(1+ye^{-x(1+y)})}{1-e^{-x(1+y)}}$, and the \textbf{Todd genus} is the genus of the power series $\frac{x}{1-e^{-x}}$; the power series for the Todd genus is obtained by taking $y=0$ in the power series for the Hirzebruch $\chi_y$-genus. Denote by $\chi_y(M)$ the Hirzebruch $\chi_y$-genus of $M$. The Hirzebruch $\chi_y$-genus of $M$ contains three topological information; the Euler number (when $y=-1$), the Todd genus (when $y=0$), and the signature (when $y=1$) of $M$.

Let $M$ be a $2n$-dimensional compact almost complex manifold. Let $\chi_y(M)=\sum_{i=0}^n a_i(M) \cdot (-y)^i$ denote the Hirzebruch $\chi_y$-genus $\chi_y(M)$ of $M$, for some integers $a_i(M)$, $0 \leq i \leq n$. Note that the standard convention is $\chi_y(M)=\sum_{i=0}^n \chi^i(M) \cdot y^i$; hence $a_i(M)=(-1)^i \chi^i(M)$. We shall use the coefficients $a_i(M)$ so that statements and proofs become clearer. In this convention, if a torus acts on $M$ with isolated fixed points, each fixed point contributes some $a_i(M)$ by $+1$. If we use $\chi^i(M)$ then there is a sign issue that each isolated fixed point contributes $\chi^i(M)$ by $(-1)^i$ for some $i$. As noted above, the Todd genus $\mathrm{Todd}(M)$ of $M$ is equal to the Hirzebruch $\chi_y$-genus of $M$ evaluated at $y=0$, that is, $\mathrm{Todd}(M)=\chi_0(M)=a_0(M)$. In \cite{Ma2}, Masuda proved that the Todd genus of an almost complex torus manifold is positive. In this paper, we prove that in fact all coefficients $a_i(M)$ of the Hirzebruch $\chi_y$-genus of an almost complex torus manifold $M$ are positive. 

\begin{theo} \label{t15} (Theorem \ref{t32})
Let $M$ be a $2n$-dimensional almost complex torus manifold. Then $a_i(M) > 0$ for $0 \leq i \leq n$, where $\chi_y(M)=\sum_{i=0}^n a_i(M) \cdot (-y)^i$ is the Hirzebruch $\chi_y$-genus of $M$. In particular, the Todd genus of $M$ is positive.
\end{theo}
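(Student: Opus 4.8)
The plan is to combine the Atiyah--Singer/Berline--Vergne localization formula for the equivariant $\chi_y$-genus with an induction on $n$ that descends to codimension-two invariant submanifolds. First I would pass from the $T^n$-action to a generic circle subgroup: choosing $\xi \in \mathrm{Lie}(T^n)$ outside the finitely many hyperplanes $\{\langle w_{p,j},\xi\rangle=0\}$ (over all fixed points $p$ and weights $w_{p,j}$), the circle it generates has the same isolated fixed point set $M^{T^n}$, and every weight pairs nontrivially with $\xi$. Localizing the equivariant $\chi_y$-genus at the fixed points, the contribution of $p$ has, with the standard conventions, the form $\prod_{j=1}^n \frac{1+y\,t^{\langle w_{p,j},\xi\rangle}}{1-t^{\langle w_{p,j},\xi\rangle}}$; letting the equivariant parameter $t\to 0$, each factor tends to $1$ or $-y$ according to the sign of $\langle w_{p,j},\xi\rangle$, so that $\chi_y(M)=\sum_{p}(-y)^{\lambda_\xi(p)}$, where $\lambda_\xi(p)=\#\{j:\langle w_{p,j},\xi\rangle<0\}$. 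Reading off coefficients gives $a_i(M)=\#\{p\in M^{T^n}:\lambda_\xi(p)=i\}$; in particular $a_i(M)\ge 0$, and the theorem reduces to showing that for each $i\in\{0,\dots,n\}$ some fixed point has exactly $i$ negative weights.

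Two structural inputs will drive the argument. The first is the symmetry $a_i(M)=a_{n-i}(M)$: replacing $\xi$ by $-\xi$ sends $\lambda_\xi(p)$ to $n-\lambda_\xi(p)$ since no weight vanishes, while $a_i$, being a coefficient of the intrinsic $\chi_y(M)$, does not depend on the generic direction, so the two counts agree. The second is a reduction to lower dimension. Fixing any $p_0\in M^{T^n}$, its $n$ weights $w_{p_0,1},\dots,w_{p_0,n}$ are linearly independent, since the fixed point is isolated and the action effective; this is the local content behind Proposition \ref{p14}. I would then set $S=\bigcap_{j=1}^{n-1}\ker w_{p_0,j}$, a circle, and let $Z$ be the connected component of $M^{S}$ through $p_0$. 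Then $T_{p_0}Z=\bigoplus_{j=1}^{n-1}L_j$ has real dimension $2(n-1)$, and $T^{n-1}=T^n/S$ acts effectively on the compact connected almost complex manifold $Z$ with $p_0$ fixed, so $Z$ is an almost complex torus manifold of dimension $2(n-1)$.

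The key point will be to align this reduction with the localization. The $T^n$-fixed points in $Z$ are exactly the $T^{n-1}$-fixed points of $Z$, and at such a $q$ the $T^n$-weights split into the $n-1$ tangent weights, which vanish on $\mathrm{Lie}(S)$ and hence descend to the weights of $Z$, together with a single normal weight $w^\perp(q)$. Since $S$ acts on the normal line bundle of $Z$ by one character, $\langle w^\perp(q),\eta_S\rangle=c\neq 0$ is constant over the connected $Z$, where $\eta_S$ generates $S$. Choosing $\xi=R\,\eta_S+\xi''$ with $\xi''$ generic in a complement to $\mathrm{Lie}(S)$ and $R\gg 0$, the sign of $\langle w^\perp(q),\xi\rangle$ is that of $c$ for every such $q$; hence $\lambda^M_\xi(q)=\lambda^Z_{\xi''}(q)+\epsilon$ with $\epsilon\in\{0,1\}$ constant. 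By the inductive hypothesis applied to $Z$, with trivial base case $n=0$, every level $0,\dots,n-1$ is attained by $\lambda^Z_{\xi''}$, so the $n$ consecutive values $\epsilon,\dots,n-1+\epsilon$ are attained by $\lambda^M_\xi$ on fixed points lying in $Z$. If $\epsilon=0$ this gives $a_i(M)>0$ for $0\le i\le n-1$ and then $a_n(M)=a_0(M)>0$ by symmetry; if $\epsilon=1$ it gives $a_i(M)>0$ for $1\le i\le n$ and then $a_0(M)=a_n(M)>0$. Either way $a_i(M)>0$ for all $i$, and the positivity of the Todd genus $a_0(M)$ is the special case.

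I expect the main obstacle to be the two places where geometry, not just combinatorics, enters: verifying the localization identity $\chi_y(M)=\sum_p(-y)^{\lambda_\xi(p)}$ with the correct sign conventions for a merely almost complex, not integrable, manifold $M$; and justifying that the normal weight has a single well-defined sign along all of $Z$, that is, that $S$ acts on the normal line bundle by a constant character and that the large-$R$ choice of $\xi$ makes this sign dominate uniformly while keeping $\xi$ generic for $M$ and $\xi''$ generic for $Z$. Once these are in place, the induction is essentially formal.
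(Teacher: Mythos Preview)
Your proposal is correct and follows essentially the same strategy as the paper: induction on $n$ by passing to a codimension-two characteristic submanifold $Z=F_0$ (a component of the fixed set of a circle $S\subset T^n$), using that $Z$ is again an almost complex torus manifold, and combining the inductive positivity $a_i(Z)>0$ with a Kosniowski-type localization to force $a_i(M)>0$.

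The packaging differs slightly. The paper applies the Kosniowski formula (Theorem~\ref{t31}) to the \emph{non-generic} circle $S$ itself, so the fixed set has positive-dimensional components; it then needs Lemma~\ref{l32} to know $a_i(F)\ge 0$ for \emph{every} component $F\subset M^S$, so that only $F_0$ must be analyzed, and uses the dual formula with $d(+,F_0)$ to pick up the remaining coefficient. You instead keep the isolated-fixed-point formula throughout by choosing a \emph{generic} $\xi=R\eta_S+\xi''$ whose $S$-component dominates, so that for $q\in Z$ one has $\lambda^M_\xi(q)=\lambda^Z_{\xi''}(q)+\epsilon$ with constant $\epsilon\in\{0,1\}$; this lets you read off $n$ consecutive positive coefficients directly from fixed points in $Z$, and the symmetry $a_i=a_{n-i}$ supplies the last one. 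Your route avoids the component-wise Kosniowski formula and the auxiliary non-negativity for the other components of $M^S$, at the cost of the (routine) verification that such $\xi$ can be chosen generic for $M$ while $\xi''$ is generic for $Z$; the paper's route is more citation-based but handles both equations~\eqref{eq1} and~\eqref{eq2} symmetrically without needing the large-$R$ trick. The two obstacles you flag are exactly the ones the paper resolves by citing Theorem~\ref{t31} and by the connectedness of $F_0$, and they present no real difficulty.
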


The conclusion of Theorem \ref{t15} is optimal in a sense that we cannot weaken any condition. If we decrease the dimension of a torus acting on $M$ or if we weaken almost complex structure to unitary structure, the conclusion of Theorem \ref{t15} need hot hold; see the end of Section \ref{s3}.

One may ask if there exists an almost complex torus manifold with minimal Hirzebruch $\chi_y$-genus, i.e., a manifold $M$ such that $a_i(M)$ is precisely 1 for all $0 \leq i \leq n$. The complex projective space $\mathbb{CP}^n$ is such a manifold; it admits an effective action of $T^n$ with $n+1$ fixed points and hence is an (almost) complex torus manifold. The Hirzebruch $\chi_y$-genus of $\mathbb{CP}^n$ is $\chi_y(\mathbb{CP}^n)=\sum_{i=0}^n (-y)^i=1-y+y^2+\cdots+(-y)^n$ and hence $a_i(\mathbb{CP}^n)=1$ for $0 \leq i \leq n$.

Note that for any positive integer $k$ and for any integer $n>1$,  Masuda constructed a $2n$-dimensional almost complex torus manifold $M$ whose Todd genus is $k$ \cite{Ma2}. If $n=1$, then $M$ must be the 2-sphere $S^2$ and its Todd genus is 1.

For an almost complex torus manifold $M$, each fixed point $p$ contributes some $a_i(M)$ by $1$ and hence $\sum_{i=0}^n a_i(M)$ is equal to the total number of fixed points. Since $a_i(M)>0$ for $0 \leq i \leq n$, Corollary \ref{c16} also follows from Theorem \ref{t15}.

Finally, we discuss a problem of Petrie type for almost complex torus manifolds. Petrie conjectured that if a homotopy $\mathbb{CP}^n$ admits a non-trivial $S^1$-action, then it has the same Pontryagin class as $\mathbb{CP}^n$.

\begin{conjecture}[\textbf{Petrie's conjecture}] \cite{P1}
Let $M$ be a $2n$-dimensional compact oriented manifold which is homotopy equivalent to the complex projective space $\mathbb{CP}^n$. If $M$ admits a non-trivial $S^1$-action, the total Pontryagin class of $M$ agrees with that of $\mathbb{CP}^n$.
\end{conjecture}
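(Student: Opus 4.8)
The plan is to reduce the conjecture to a finite list of numerical identities among the Pontryagin numbers and then to try to extract those identities from the circle action; since Petrie's conjecture is famously open in general, what follows is the natural line of attack together with an account of where it stalls. Because $M$ is homotopy equivalent to $\mathbb{CP}^n$ we have $H^*(M;\mathbb{Z})=\mathbb{Z}[x]/(x^{n+1})$ with $\deg x=2$, so the total Pontryagin class necessarily has the form $p(M)=\sum_{i=0}^{\lfloor n/2\rfloor}\lambda_i\,x^{2i}$ for integers $\lambda_i$, and the assertion $p(M)=p(\mathbb{CP}^n)$ is equivalent to the single statement that $p(M)$ factors as $(1+x^2)^{n+1}$, i.e. $\lambda_i=\binom{n+1}{i}$ for every $i$. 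First I would record the constraints that come for free from homotopy invariance: the Euler number is $n+1$ and the signature (a homotopy invariant, being determined by the cup product form) equals that of $\mathbb{CP}^n$, so Hirzebruch's signature formula $\mathrm{sign}(M)=\langle L(p_1,\dots),[M]\rangle$ already pins down one combination of the $\lambda_i$. This is far from enough, and the role of the hypothesis is that the $S^1$-action must supply the missing equations.

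The main engine I would use is rigidity of genera under $S^1$-actions. By the Atiyah--Singer $G$-signature theorem the equivariant signature is rigid, independent of the group element, and the same holds for its twisted versions, the signatures with coefficients in tensor powers of the complexified tangent bundle; when $M$ is spin, which for a homotopy $\mathbb{CP}^n$ happens exactly when $n$ is odd, the stronger rigidity of the universal elliptic genus (Witten, Bott--Taubes) becomes available. Localizing any such rigid invariant to the fixed-point set expresses it through the normal weights and the Pontryagin data of the fixed components. Since $M$ must have fixed points ($n+1\neq 0$) and its Pontryagin class is the unknown $\sum\lambda_i x^{2i}$, demanding that each localized expression be constant in the circle parameter yields a family of polynomial identities constraining the $\lambda_i$. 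The heart of the plan is to show that this full package of identities, over the signature and all its higher twists, is rich enough to force $p(M)=(1+x^2)^{n+1}$.

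In the $T^n$ case solved by Petrie the program runs cleanly, and it is exactly the framework of this paper that explains why: by Proposition \ref{p14} a $T^n$-action on a manifold with Euler number $n+1$ has $n+1$ isolated fixed points arranged in a graph with no multiple edges, and, as in the torus analogue established here, that graph is forced to coincide with the graph of a linear action on $\mathbb{CP}^n$. Once the weights at all $n+1$ vertices are known, the localization identities become a determined, solvable system and the $\lambda_i$ are pinned down. The hard part, and the reason the conjecture is open for a general nontrivial $S^1$-action, is that a single circle supplies far too little rigidity: the fixed-point set need not be isolated, its own Pontryagin numbers enter as additional unknowns, and the one-parameter localization identities underdetermine the $\lambda_i$. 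Closing this gap would require either manufacturing extra rigidity, for instance by exploiting the whole family of elliptic genera or higher $K$-theoretic operations restricted to the fixed set, or reducing a bare $S^1$-action to an effectively higher-rank situation where the graph-theoretic rigidity of Proposition \ref{p14} can be brought to bear; I expect this reduction to be the principal obstacle.
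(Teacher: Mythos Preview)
This statement is Petrie's conjecture, which the paper presents as an \emph{open problem}, not as a result it proves. There is therefore no proof in the paper to compare your proposal against. You recognize this yourself and offer, appropriately, a sketch of the standard localization/rigidity line of attack together with an honest account of where it stalls for a bare $S^1$-action. That is the correct posture: no proof was expected or possible here.

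The paper's actual contribution in this direction is not a proof of the conjecture but the distinct Theorem~\ref{t46}, which assumes a full $T^n$-action and an almost complex structure and concludes agreement of Chern numbers (hence Pontryagin numbers) with $\mathbb{CP}^n$ from the Euler number alone. Your paragraph invoking Proposition~\ref{p14} and the graph rigidity correctly summarizes why the $T^n$ case is tractable in this framework, though note that the paper's argument does not go through rigidity of genera or the $G$-signature theorem at all: it pins down the weights directly via the graph combinatorics and the ABBV localization of the class $1$ on $4$-dimensional isotropy submanifolds, and everything else (Chern numbers, cobordism class, $\chi_y$-genus) is read off from the weights.
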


A weak version of the conjecture adds an assumption that the action on $M$ has isolated fixed points. While the Petrie's conjecture remains open in its full generality, there are many partial or related results. Petrie proved the conjecture if a homotopy $\mathbb{CP}^n$ admits an action of the torus $T^n$ instead of an $S^1$-action \cite{P2}. Dejter confirmed the Petrie's conjecture in dimension up to $6$ \cite{De}, Musin confirmed it in dimension up to $8$ \cite{Mu}, and James confirmed in dimension 8 \cite{Jam}. Dessai and Wilking reduced the dimension of a torus acting on a manifold by showing that the conclusion of the Petrie's conjecture holds if a homotopy $\mathbb{CP}^n$ admits a $T^k$-action with $2n \leq 8k-4$ \cite{DW}.

The Petrie's conjecture can be thought of as a particular case of the following question.

\begin{que} If a manifold equipped with a group action (torus action) shares some information with $\mathbb{CP}^n$, then what other information does the manifold share with $\mathbb{CP}^n$? \end{que}

For instance, Tsukada and Washiyama \cite{TW} and Masuda \cite{Ma1} proved that the conclusion of the Petrie's conjecture holds if a compact oriented $S^1$-manifold with the same cohomology as $\mathbb{CP}^n$ has three or four fixed components, respectively. Hattori proved the Petrie's conjecture under an assumption that a compact unitary manifold has the same cohomology of $\mathbb{CP}^n$ with first Chern class $(n+1)x$ and admits an $S^1$-action \cite{H1}. Tolman asked a symplectic analogue of the Petrie's conjecture that, if a compact symplectic manifold $M$ with $H^{2i}(M;\mathbb{R}) \cong H^{2i}(\mathbb{CP}^n;\mathbb{R})$ for all $i$ admits a Hamiltonian $S^1$-action, then $H^j(M;\mathbb{Z}) \cong H^j(\mathbb{CP}^n;\mathbb{Z})$ for all $j$ \cite{T}. In the same paper Tolman answered this question affirmatively in dimension up to 6, and the work of Godinho and Sabatini \cite{GS} together with the work of Tolman and the author \cite{JT} confirmed it in dimension 8. Motivated by the Petrie's conjecture, Masuda and Suh asked if an isomorphism between the cohomology rings of two torus manifolds preserves the Pontryagin classes of them \cite{MS}.

In the Petrie's conjecture (or in the above analogues), for a $2n$-dimensional manifold with an $S^1$-action (or $T^n$-action), being homotopy equivalent to $\mathbb{CP}^n$ (or having the same comohology of $\mathbb{CP}^n$, etc) can be regarded as an assumption which is somehow strong enough to conclude that it has the same Pontryagin class as $\mathbb{CP}^n$. In this paper, we show that for an almost complex torus manifold $M$, having the same Euler number as $\mathbb{CP}^n$ is enough to force $M$ to share many other invariants with $\mathbb{CP}^n$. More precisely, we prove that if an almost complex torus manifold only has the same Euler number as $\mathbb{CP}^n$, then a graph describing it agrees with that for a linear action on $\mathbb{CP}^n$. Consequently, weights at the fixed points, all the Chern numbers, the Hirzebruch $\chi_y$-genus, the Todd genus, and the signature of $M$ and $\mathbb{CP}^n$ agree. Since $M$ and $\mathbb{CP}^n$ have the same Chern numbers, they are equivariantly cobordant. Moreover, if in addition the action on $M$ is equivariantly formal, then the rational equivariant cohomology and the Chern classes of $M$ and $\mathbb{CP}^n$ also agree.

\begin{theo} \label{t19} (Theorem \ref{t46})
Let $M$ be a $2n$-dimensional almost complex torus manifold with Euler number $n+1$. Then the following invariants of $M$ and $\mathbb{CP}^n$ are equal. Here, $\mathbb{CP}^n$ is equipped with a linear $T^n$-action.
\begin{enumerate}
\item A (multi)graph describing it.
\item The weights at the fixed points.
\item All the Chern numbers.
\item Equivariant cobordism class.
\item The Hirzebruch $\chi_y$-genus.
\item The Todd genus.
\item The signature.
\end{enumerate}
If furthermore the action on $M$ is equivariantly formal, the following invariants of $M$ and $\mathbb{CP}^n$ are also equal.
\begin{enumerate}
\item[(8)] The rational equivariant cohomology.
\item[(9)] The Chern classes.
\end{enumerate}
\end{theo}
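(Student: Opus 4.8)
The plan is to first pin down the combinatorics of the graph, then its labels, and finally to read off the remaining invariants from the fixed point data. Since the Euler number equals the number of fixed points and Corollary~\ref{c17} gives at least $n+1$ of them, the hypothesis forces $M$ to have exactly $n+1$ isolated fixed points. Each fixed point carries exactly $n$ weights, so in any graph describing $M$ every vertex has degree $n$; as there are $n+1$ vertices and, by Proposition~\ref{p14}, no self-loops and no multiple edges, the graph must be the complete graph $K_{n+1}$, which is precisely the graph underlying a linear action on $\mathbb{CP}^n$. This settles (1). At the same time, writing $\chi_y(M)=\sum_{i=0}^n a_i(M)(-y)^i$, Theorem~\ref{t15} gives $a_i(M)\ge 1$ for every $i$, while $\sum_{i=0}^n a_i(M)$ counts the fixed points and hence equals $n+1$; therefore $a_i(M)=1$ for all $i$, so $\chi_y(M)=\sum_{i=0}^n(-y)^i=\chi_y(\mathbb{CP}^n)$. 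Specializing $y=0$ and $y=1$ yields (5), (6), and (7) at once. Moreover, for a generic $\xi\in\mathbb{Z}^n$, counting the fixed points with exactly $i$ weights negative on $\xi$ recovers $a_i(M)$, so there is exactly one fixed point of each Morse-type index $0,1,\dots,n$.

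The heart of the argument is (2): identifying the weight $w_{ij}$ on each edge of $K_{n+1}$. Labelling the vertices $p_0,\dots,p_n$, I aim to produce $\eta_0,\dots,\eta_n\in\mathbb{Z}^n$ with $w_{ij}=\eta_j-\eta_i$, which is exactly the weight pattern of a linear action; by Definition~\ref{d12} this is the assertion that the axial function is \emph{exact}, i.e.\ $w_{ij}+w_{jk}+w_{ki}=0$ around every triangle. Effectiveness shows the $n$ weights at each vertex are linearly independent, so after an automorphism of $T^n$ I may take the weights at $p_0$ to be a standard basis and set $\eta_0=0$; the triangle relation on $\{p_0,p_i,p_j\}$ then forces $w_{ij}=\eta_j-\eta_i$. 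To establish each triangle relation I plan to realize the three mutually adjacent fixed points inside a $4$-dimensional almost complex submanifold: taking $H$ to be the identity component of $\ker w_{0i}\cap\ker w_{0j}$, the component $F$ of $M^H$ through $p_0$ has tangential weights lying in the rank-two annihilator $\langle w_{0i},w_{0j}\rangle$, hence is four-dimensional at $p_0$ and is an almost complex torus manifold for $T^n/H\cong T^2$. The base case $n=2$ is then clean: for a four-dimensional almost complex torus manifold with exactly three fixed points the single Atiyah--Bott--Berline--Vergne identity $\sum_{p}\tfrac{1}{\prod_i w_{p,i}}=0$ collapses to the desired relation among the three weights.

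Controlling these isotropy submanifolds is where I expect the main difficulty. Ensuring $\dim F=4$ is handled by the basis property just described, but I must also guarantee that $F$ is \emph{minimal}, carrying exactly the three given fixed points, so that the base case applies and $S_{ij}\subset F$; a priori other $T^n$-fixed points could lie in $F$, in which case $F$ need not be a $\mathbb{CP}^2$ and the triangle relation could fail. The resource for this bookkeeping is the index data of Theorem~\ref{t15}, applied simultaneously to $M$ and to each $F$: choosing $\xi$ generic modulo $H$, the index in $F$ of a fixed point is bounded by its index in $M$, and the constraint $a_i(F)\ge 1$ together with the already-established fact that $M$ realizes each index $0,\dots,n$ exactly once is what I expect to cap the fixed-point count of $F$ at three. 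Pushing this counting through uniformly over all triangles is the crux of the proof.

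Once the weights agree with those of $\mathbb{CP}^n$, the remaining invariants follow formally. By the Atiyah--Bott--Berline--Vergne localization formula every Chern number of an almost complex manifold with isolated fixed points is a sum over fixed points of a universal symmetric expression in the weights, so identical weights give identical Chern numbers, proving (3); and since the equivariant unitary cobordism class of a torus manifold with isolated fixed points is determined by its fixed point data, (4) follows as well. Finally, if the action is equivariantly formal, then $M$ and $\mathbb{CP}^n$ are GKM manifolds with the same labelled graph, so the Chang--Skjelbred/GKM presentation of $H^{*}_{T^n}(-;\mathbb{Q})$ as tuples of polynomials satisfying the edge congruences coincides for the two spaces, giving (8); restricting the equivariant Chern classes to the fixed points, where they become elementary symmetric functions of the weights, and using surjectivity of $\iota^{*}$ then yields (9).
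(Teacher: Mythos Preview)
Your plan mirrors the paper's proof: the $K_{n+1}$ combinatorics, the computation $\chi_y(M)=\sum_{i=0}^n(-y)^i$, the reduction of each triangle $\{p_0,p_i,p_j\}$ to a $4$-dimensional isotropy submanifold $F$ of $M^H$ with $H=\ker w_{0i}\cap\ker w_{0j}$, and the ABBV identity $\int_F 1=0$ forcing $w_{ij}=w_{0j}-w_{0i}$ are exactly the paper's steps, and your deductions of (3)--(9) from the fixed point data match Remark~\ref{r111}.

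The step you correctly flag as the crux---showing $|F^{T^n}|=3$---needs sharpening. The bare inequality $\mathrm{index}_F(p)\le\mathrm{index}_M(p)$ for a single generic $\xi$ does not suffice: two points of $F$ with equal $F$-index can have distinct $M$-indices, so no pigeonhole contradiction arises. Nor can one $\xi$ serve both roles, since for $\xi\in\mathrm{Lie}(H)$ all tangential $\xi$-weights on $F$ vanish, while for $\xi\notin\mathrm{Lie}(H)$ the normal $\xi$-weights (hence the normal count) need not be constant along $F$. The paper's fix is to choose the circle $S$ generic \emph{inside} $H$, so that $M^S=M^H$ and $F$ is a full component of $M^S$; the Kosniowski formula (Theorem~\ref{t31}) then gives $\chi_y(M)=\sum_{F'\subset M^S}(-y)^{d(-,F')}\chi_y(F')$ with $d(-,F')$ constant on each component and every $a_m(F')\ge 0$ by Lemma~\ref{l32}. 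If $F$ carried a fourth $T^n$-fixed point then $a_0(F)+a_1(F)+a_2(F)\ge 4$ forces some $a_m(F)\ge 2$, whence $a_{m+d(-,F)}(M)\ge 2$, contradicting $a_k(M)=1$. With this correction your argument is complete and coincides with the paper's.
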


\begin{rem} \label{r111}
Note that in Theorem \ref{t19},
\begin{itemize}
\item (1) implies (2) by Definition \ref{d12}.
\item (2) implies (3) because Chern numbers are computed in terms of the weights at the fixed points in the Atiyah-Bott-Berline-Vergne localization formula (Theorem \ref{t43}).
\item (3) implies (4) because two manifolds are equivariantly cobordant if and only if they have the same Chern numbers.
\item (3) implies (5) because the coefficients of the Hirzebruch $\chi_y$-genus can be computed as rational combinations of the Chern numbers.
\item (5) implies (6) and (7). 
\item Alternatively, having the Euler number $n+1$ implies (5); see Theorem \ref{t42}.
\item (1) implies (8) if in addition the action on $M$ is equivariantly formal, because our graph is a GKM graph, and a GKM graph determines the equivariantly cohomology of a given manifold.
\item (9) follows from (1) and Proposition 3.4 of \cite{GKZ}. Proposition 3.4 of \cite{GKZ} states that our graph describing an almost complex torus manifold (which is called a signed graph in \cite{GKZ}) determines its Chern classes if it is equivariantly formal.
\end{itemize}
\end{rem}

\begin{exa}\label{e111}
By a linear $T^n$-action on $\mathbb{CP}^n$ we mean an action
\begin{center}
$g \cdot [z_0: z_1:\cdots :z_n]=[z_0:g^{a_1} z_1: \cdots :g^{a_n}z_n]$
\end{center}
for all $g \in T^n \subset \mathbb{C}^n$, where $a_1$, $\cdots$, $a_n$ form a basis of $\mathbb{Z}^n$. Let $a_0=(0,\cdots,0) \in \mathbb{Z}^n$. The action has $n+1$ fixed points $p_0=[1:0:\cdots:0]$, $\cdots$, $p_n=[0:\cdots:0:1]$, and the weights at $p_i$ are $\{a_j-a_i\}_{j \neq i}$.

Now, we associate a (multi)graph describing this $T^n$-action on $\mathbb{CP}^n$. To each fixed point $p_i$ we assign a vertex (also denoted by $p_i$). For $i<j$, the fixed points $p_i$ and $p_j$ are in the fixed component $[0:\cdots:0:z_i:0:\cdots:0:z_j:0:\cdots:0]$ of $M^{\ker (a_j-a_i)}$, which is the 2-sphere, on which the $T^n$-action on $\mathbb{CP}^n$ restricts to act by
\begin{center}
$g \cdot [0:\cdots 0:z_i:0:\cdots:0:z_j:0:\cdots:0]$

$=[0:\cdots:0:g^{a_i} z_i:0:\cdots:0:g^{a_j}z_j:0:\cdots:0]$,
\end{center}
giving $p_i$ and $p_j$ weight $a_j-a_i$ and $a_i-a_j$ for this action, respectively. Therefore, for $i<j$ we draw an edge from $p_i$ to $p_j$ and give the edge label $a_j-a_i$. Let $\Gamma$ be a (multi)graph obtained. Then $\Gamma$ describes the linear $T^n$-action on $\mathbb{CP}^n$. Figure \ref{fig1} is such a graph for $n=3$.

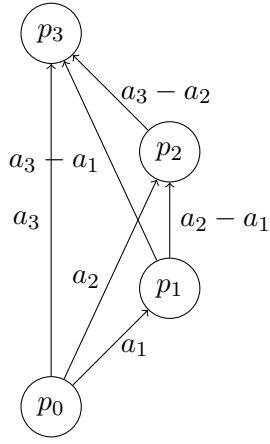
\begin{figure} 
\centering
\begin{tikzpicture}[state/.style ={circle, draw}]
\node[state] (a) {$p_{0}$};
\node[state] (b) [above right=of a] {$p_1$};
\node[state] (c) [above=of b] {$p_2$};
\node[state] (d) [above left=of c] {$p_3$};
\path (a) [->] edge node[right] {$a_1$} (b);
\path (a) [->] edge node[left] {$a_2$} (c);
\path (a) [->] edge node[left] {$a_3$} (d);
\path (b) [->] edge node [right] {$a_2-a_1$} (c);
\path (b) [->] edge node [left] {$a_3-a_1$} (d);
\path (c) [->] edge node [right] {$a_3-a_2$} (d);
\end{tikzpicture}
\caption{Graph describing a linear $T^3$-action on $\mathbb{CP}^3$}\label{fig1}
\end{figure}
\end{exa}

We describe the equivariant cohomology of an almost complex torus manifold $M$ with minimal Euler number $\frac{1}{2}\dim M+1$ which is equivariantly formal. Let $p_0$, $\cdots$, $p_n$ denote the fixed points, where $\dim M=2n$. To each fixed point $p_i$ we assign a vertex, also denoted by $p_i$. For each $i<j$ we draw an edge from $p_i$ to $p_j$ and give the edge label $w_{i,j}$. Let $\Gamma$ be a graph obtained. By Proposition \ref{p14} and Remark \ref{r25}, $\Gamma$ describes $M$. The proof of Theorem \ref{t19} shows that $w_{i,j}=w_{0,j}-w_{0,i}$ for $0<i<j$. Therefore, the graph $\Gamma$ is the same as the graph describing a linear $T^n$-action on $\mathbb{CP}^n$ in Example \ref{e111}. Because $M$ is equivariantly formal, there is a $H_{T^n}^*(\textrm{pt})$-module isomorphism $H_{T^n}^*(X)=H^*(X) \otimes H_{T^n}^*(\textrm{pt})$. Moreover, by the main theorem of \cite{GKM},
\begin{center}
$\displaystyle H_{T^n}^*(M;\mathbb{Q}) \simeq \{(f_{p_i}) \in \bigoplus_{p_i \in M^{T^n}} H^*(BT^n;\mathbb{Q}) \, | \, f_{p_i}-f_{p_0} \in (w_{0,i}) \textrm{ for }1 \leq i \leq n, f_{p_j}-f_{p_i} \in (w_{0,j}-w_{0,i})\textrm{ for } 0<i<j\}$,
\end{center}
where $(w_{0,i})$ is the ideal generated by the weight $w_{0,i} \in H^2(BT^n;\mathbb{Q})$ (and similarly for $(w_{0,j}-w_{0,i})$) and $BT^n$ is the classifying space of $T^n$. Setting $w_{0,0}$ to be the zero weight $(0,\cdots,0)$, we can simply this to
\begin{center}
$\displaystyle H_{T^n}^*(M;\mathbb{Q})$ 

$\displaystyle \simeq \{(f_{p_i}) \in \bigoplus_{p_i \in M^{T^n}} H^*(BT^n;\mathbb{Q}) \, | \, f_{p_j}-f_{p_i} \in (w_{0,j}-w_{0,i})\textrm{ for } i<j\}$.
\end{center}
Because $\Gamma$ describes both of $M$ and $\mathbb{CP}^n$ with a linear action of Example \ref{e111}, their equivariant cohomologies are the same. Also, note that the $n$-weights $w_{0,1}$, $\cdots$, $w_{0,n}$ at $p_0$ completely determines the equivariant cohomology of $M$. Because $p_0$ is arbitrary, the weights at any fixed point determine it.

The conclusion of Theorem \ref{t19} is optimal in a sense that it need not hold if we reduce the dimension of a torus acting on a manifold or we weaken almost complex structure to unitary structure. 

First, we cannot reduce the dimension of a torus. The 6-sphere $S^6$ admits an almost complex structure and admits a $T^2$-action with 2 fixed points $p$ and $q$ that have weights $\{-a-b,a,b\}$ and $\{-a,-b,a+b\}$ for some non-zero $a \neq b \in \mathbb{Z}^2$. We blow up $p$. Blowing up $p$ equivariantly, instead of $p$ we get three fixed points $p_1$, $p_2$, and $p_3$ that have weights $\{-2a-b,a,b-a\}$, $\{-a-b,2a+b,a+2b\}$, and $\{a-b,-a-2b,b\}$, respectively. The $T^2$-action on $S^6$ extends to a $T^2$-action on the blown up manifold $\widetilde{S^6}$ with 4 fixed points $p_1$, $p_2$, $p_3$, and $q$ and hence the Euler number of $\widetilde{S^6}$ is 4, but its Hirzebruch $\chi_y$-genus is $\chi_y(\widetilde{S^6})=-2y+2y^2 \neq 1-y+y^2-y^3=\chi_y(\mathbb{CP}^3)$. This was discussed for an $S^1$-action in \cite{Jan3} and naturally extends to a $T^2$-action.

Second, in Theorem \ref{t19} we cannot weaken almost complex structure to unitary structure. Consider the $2$-sphere $S^2=\{(z,x) \in \mathbb{C} \times \mathbb{R} \, : \, |z|^2+x^2=1\}$ as a unitary manifold $TS^2 \oplus \underline{\mathbb{R}}^2=S^2 \times \mathbb{C}^2$. Let $S^1$ act on $S^2$ by $g \cdot (z,x)=(gz,x)$ for all $g \in S^1 \subset \mathbb{C}$. Its Hirzebruch $\chi_y$-genus with this unitary structure vanishes, while the Hirzebruch $\chi_y$-genus of $\mathbb{CP}^1$ is $1-y$. We discuss $T^n$-actions on $S^{2n}$ as unitary manifolds more in details at the end of Section \ref{s3}.

The structure of this paper is as follows. In Section \ref{s2}, we show that for a torus action on a compact almost complex manifold with isolated fixed points there exists a multigraph describing it (Proposition \ref{p13}), and for an almost complex torus manifold a multigraph describing it is a graph (Proposition \ref{p14}). In Section \ref{s3}, we show that all the coefficients $a_i(M)$ of the Hirzebruch $\chi_y$-genus $\chi_y(M)=\sum_{i=0}^{n} a_i(M) \cdot (-y)^i$ of an almost complex torus manifold $M$ are positive (Theorem \ref{t15}) and discuss its applications. In Section \ref{s4}, we prove a Petrie type problem for almost complex torus manifolds, Theorem \ref{t19}, in the proof of which Proposition \ref{p14} and Theorem \ref{t15} play crucial roles.

\section*{Acknowledgements}

The author would like to thank Mikiya Masuda for valuable comments on an earlier version of this paper, one of which is that the theory of multi-fans in \cite{Ma2} can prove (2) Theorem \ref{t19} that if a $2n$-dimensional almost complex torus manifold $M$ has Euler number $n+1$, the multi-fan of $M$ agrees with the ordinary fan of $\mathbb{CP}^n$ up to unimodular transformation and this implies (2).

\section{Multigraphs for torus actions} \label{s2}

Let the circle act on a compact almost complex manifold $M$ with isolated fixed points. For each fixed point $p$, the \textbf{index} of $p$ is the number of negative weights at $p$. In \cite{JT}, Tolman and the author proved that a multigraph describing $M$ exists. Moreover, there exists one without any self-loops. 

\begin{lem} \cite{JT} \label{l21}
Let the circle act on a compact almost complex manifold $M$ with isolated fixed points. There exists a (directed labeled) multigraph describing $M$ such that, for each edge $e$, the index of $i(e)$ in the isotropy submanifold $M^{\mathbb{Z}/(w(e))}$ is one less than the index of $t(e)$ in $M^{\mathbb{Z}/(w(e))}$. In particular, the multigraph has no self-loops.
\end{lem}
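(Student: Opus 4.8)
The plan is to build the multigraph one ``weight slice'' at a time and to arrange the index condition within each slice separately.

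\textbf{Reduction to a single isotropy component.} Fix a positive integer $w$ and a connected component $Z$ of the isotropy submanifold $M^{\mathbb{Z}/w}$. Since $\mathbb{Z}/w$ is finite and preserves $J$, $Z$ is a compact almost complex submanifold, the quotient circle $S^1/(\mathbb{Z}/w)\cong S^1$ acts effectively on $Z$, and $Z^{S^1}$ consists of those original fixed points lying on $Z$. At such a point $p$ the weights of the reduced action on $T_pZ$ are exactly $w_{p,i}/w$ for the indices $i$ with $w\mid w_{p,i}$, so an edge labeled $w$ with both endpoints on $Z$ is the same datum as a reduced weight $+1$ at its initial vertex and $-1$ at its terminal vertex; moreover, because $w>0$, the index of a point of $Z^{S^1}$ in $M^{\mathbb{Z}/w}$ equals its index for the reduced action on $Z$. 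I would construct all edges labeled $w$ inside a single such $Z$; then condition (iii) of Definition \ref{d12} is automatic, and taking the union over all pairs $(w,Z)$ produces a multigraph in which each positive weight $+w$ at $p$ becomes one outgoing edge and each negative weight $-w$ becomes one incoming edge, so condition (ii) holds.

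\textbf{Reduction of each slice to a counting identity.} On a fixed $Z$ with $d=\tfrac12\dim Z$, group the points of $Z^{S^1}$ by their reduced index. Each reduced weight $+1$ at $p$ should become an outgoing edge terminating one index higher, and each reduced weight $-1$ an incoming edge originating one index lower. Since all points of $Z^{S^1}$ lie in the single component $Z$, any bijection realizing these prescribed degrees yields admissible edges, so the slice is solvable if and only if, for every $k$,
\[
\#\{(p,i): \mathrm{ind}_Z(p)=k,\ w_{p,i}=w\}=\#\{(q,j):\mathrm{ind}_Z(q)=k+1,\ w_{q,j}=-w\}.
\]
This identity is forced: an incoming edge at a point of index $k+1$ can only originate from a point of index $k$, and conversely.

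\textbf{The crux: the index-refined counting identity.} The heart of the proof is to establish the displayed identity on the effective almost complex $S^1$-manifold $Z$. I would prove it by a Morse-theoretic or localization argument: either (a) fix an invariant Riemannian metric and show that the reduced weight $\pm1$ directions assemble into invariant gradient $2$-spheres, each joining a fixed point of index $k$ to one of index $k+1$, which simultaneously proves the identity and exhibits the edges; or (b) apply the Atiyah--Bott--Berline--Vergne localization formula, equivalently the rigidity of the equivariant holomorphic Lefschetz number, and extract Kosniowski-type relations among the weight-by-index counts. The content of the identity is that the fixed-point data of an almost complex circle action behaves like the critical-point data of a perfect Morse function, so that index $k$ and index $k+1$ are linked exactly by the reduced weight $\pm1$ connections.

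\textbf{Conclusion and main obstacle.} Granting the identity, I choose a bijection on each $Z$, realize the edges, and take the union over all $(w,Z)$; the resulting multigraph describes $M$ in the sense of Definition \ref{d12} and has, for every edge $e$, $\mathrm{ind}_{M^{\mathbb{Z}/(w(e))}}(i(e))+1=\mathrm{ind}_{M^{\mathbb{Z}/(w(e))}}(t(e))$. In particular $i(e)\neq t(e)$, so there are no self-loops. I expect the counting identity of the previous step to be the main obstacle: for primitive reduced weights there is no smaller isotropy submanifold on which to induct, so one cannot merely read off fixed $2$-spheres as in the non-primitive case, and a genuinely global argument (Morse-theoretic or via localization) is required.
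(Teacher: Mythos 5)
The paper itself gives no proof of this lemma; it is imported verbatim from \cite{JT}. Measured against what that proof actually requires, your reduction is the standard one and is sound: passing to a connected component $Z$ of $M^{\mathbb{Z}/w}$, dividing the weights by $w$, and observing that the index in $M^{\mathbb{Z}/w}$ is unchanged correctly converts conditions (ii) and (iii) of Definition \ref{d12} together with the index requirement into the single counting identity you display, namely that for an almost complex $S^1$-manifold with isolated fixed points the number of occurrences of the weight $+1$ at fixed points of index $k$ equals the number of occurrences of the weight $-1$ at fixed points of index $k+1$. Granting that identity, any bijection within each slice produces the desired multigraph, and self-loops are excluded because the two endpoints of an edge have different indices in $M^{\mathbb{Z}/(w(e))}$.

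The gap is that this identity, which you correctly identify as the entire content of the lemma, is never proved: you only name two candidate strategies. Of these, (a) is not viable at this level of generality. A general almost complex $S^1$-manifold carries no moment map or other preferred Morse function whose critical points are the fixed points with the prescribed indices, and the weight-$\pm1$ directions need not close up into smooth invariant gradient $2$-spheres joining an index-$k$ point to an index-$(k{+}1)$ point; indeed that geometric picture is essentially what the lemma asserts combinatorially, so invoking it is circular. Strategy (b) is the right one but must actually be executed. One applies the Atiyah--Singer fixed point formula to the equivariant Hirzebruch genus, $\chi_y(M)(t)=\sum_{p\in M^{S^1}}\prod_{i}\frac{1+y\,t^{-w_{p,i}}}{1-t^{-w_{p,i}}}$, uses its rigidity (independence of $t$), and extracts the identity from the vanishing of the coefficient of $t^{1}$ in the Laurent expansion at $t=0$: expanding each factor as $1+(1+y)t^{|w_{p,i}|}+\cdots$ for $w_{p,i}<0$ and $-y-(1+y)t^{w_{p,i}}+\cdots$ for $w_{p,i}>0$, only the weights $\pm1$ contribute to the $t^{1}$ term, and equating its coefficient to zero power by power in $y$ yields exactly $\#\{(p,i):\lambda_p=k,\ w_{p,i}=1\}=\#\{(q,j):\lambda_q=k+1,\ w_{q,j}=-1\}$. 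This is the computation, going back to Hattori \cite{H} and Tolman \cite{T}, on which \cite{JT} relies; without it your proposal reduces the lemma to an equivalent unproved statement rather than proving it.
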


For a torus action on a compact almost complex manifold, there also is a multigraph which encodes weights at the fixed points. For a labeled directed multigraph $\Gamma$, if $e$ is an edge of $\Gamma$ and has label $w(e)$, we say that $(i(e),t(e))$ is $w(e)$-edge, and $(t(e),i(e))$ is $-w(e)$-edge.

\begin{pro} \label{p22}
Let a $k$-dimensional torus $T^k$ act on a $2n$-dimensional compact almost complex manifold $M$ with isolated fixed points.
\begin{enumerate}[(1)]
\item For any $w \in \mathbb{Z}^k$,
\begin{center}
$\displaystyle \sum_{p \in M^{S^1}} N_p(w)=\sum_{p \in M^{S^1}} N_p(-w)$,
\end{center}
where $N_p(w)=|\{ i \, : \, w_{pi}=w, 1 \leq i \leq n\}|$ is the number of times $w$ occurs as a weight at $p$.
\item There exists a multigraph $\Gamma$ describing $M$ that has no self-loops. 
\end{enumerate}
\end{pro}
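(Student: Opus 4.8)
The plan is to reduce both claims to the circle case recorded in Lemma~\ref{l21}. Fix a nonzero weight $w \in \mathbb{Z}^k$ and set $H = \ker w \subseteq T^k$. The first thing I would establish is a purely algebraic fact: $T^k/H \cong S^1$, and the characters of $T^k$ that are trivial on $H$ are exactly the multiples $\mathbb{Z} w$ of $w$. Indeed, writing $w = d\,w'$ with $w'$ primitive and $d = \gcd(w_1,\dots,w_k)$, the character $w$ is $s \mapsto s^d$ composed with the surjection $w' : T^k \to S^1$, so $w$ is surjective with $\ker w = (w')^{-1}(\mu_d)$, where $\mu_d \subset S^1$ is the group of $d$-th roots of unity; a character $m w'$ is trivial on $\ker w$ if and only if $d \mid m$, i.e. if and only if $m w' \in \mathbb{Z} w$. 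In particular $H$ may be disconnected, and it is essential to work with the full group $H=\ker w$ rather than with its identity component, both because Definition~\ref{d12}(iii) refers to $M^{\ker w}$ and because, as I explain next, this is exactly what makes $w$ visible as the \emph{primitive} residual weight.

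Next I would set up the isotropy submanifold. Since $H$ is normal (as $T^k$ is abelian) and the action preserves $J$, the fixed set $M^{H}$ is a compact almost complex submanifold, and because $T^k$ is connected it preserves each connected component $N$ of $M^H$; thus the residual circle $T^k/H \cong S^1$ acts on each $N$ with isolated fixed points $N \cap M^{T^k}$. A tangent weight of $M$ at a fixed point $p$ lies in $T_pN$ precisely when it is trivial on $H$, i.e. when it is a multiple of $w$; under the isomorphism $T^k/H \xrightarrow{\sim} S^1$ induced by $w$, the multiple $m w$ becomes the residual weight $m$. Hence the $M$-weight $w$ (resp. $-w$) at $p$ corresponds exactly to a residual weight $+1$ (resp. $-1$) on the component $N \ni p$, so $N_p(w)$ and $N_p(-w)$ equal the numbers of residual weights $+1$ and $-1$ at $p$, respectively.

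Part (1) then follows immediately: on each component $N$ of $M^H$, Lemma~\ref{l21} provides a multigraph describing the residual circle action whose label-$1$ edges pair each residual weight $+1$ at an initial vertex with a residual weight $-1$ at a terminal vertex; consequently the total number of residual $+1$ weights on $N$ equals the total number of residual $-1$ weights, and summing over the components of $M^H$ gives $\sum_p N_p(w) = \sum_p N_p(-w)$. For part (2) I would refine this count into an honest pairing. Using a generic linear functional on $\mathbb{Z}^k$ I would choose a set $W^+$ containing exactly one of each pair $\{w,-w\}$ of occurring weights. For every $w \in W^+$ and every component $N$ of $M^{\ker w}$, I take the label-$1$ edges of the circle multigraph on $N$ from Lemma~\ref{l21}, and for each such edge from $p$ to $q$ I add to $\Gamma$ an edge from $p$ to $q$ with label $w$. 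By condition (ii) for the circle multigraph on $N$, these label-$1$ edges account for all residual $\pm 1$ weights on $N$ exactly once, hence for every occurrence of $\pm w$ among the weights of fixed points in $N$; since each fixed point lies in a unique component of $M^{\ker w}$ and each $M$-weight equal to $\pm w$ is tangent to that component, letting $N$ and $w \in W^+$ vary records every weight occurrence exactly once. This gives Definition~\ref{d12}(ii); part (i) holds by construction, (iii) holds since the endpoints of a label-$w$ edge lie in the same component of $M^{\ker w}$, and the absence of self-loops is inherited from Lemma~\ref{l21}, whose label-$1$ edges already join distinct vertices.

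The main obstacle is conceptual rather than computational: organizing the bookkeeping so that the reduction isolates the single weight value $w$ (and not its proper multiples) while simultaneously respecting the component structure demanded by Definition~\ref{d12}(iii). This is precisely what forces the use of the full, possibly disconnected, subgroup $\ker w$: passing to its identity component would merge distinct $\ker w$-components and would make $w$ appear as the residual weight $d=\gcd(w_1,\dots,w_k)$ rather than $1$, breaking both the component condition and the clean matching with the circle case. Once the algebraic fact identifying $\mathbb{Z} w$ is in place, the remaining steps are routine applications of Lemma~\ref{l21}.
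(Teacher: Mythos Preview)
Your argument is correct and follows the same overall strategy as the paper's: pass to components of the isotropy submanifold $M^{\ker w}$ and invoke Lemma~\ref{l21} for a circle action there. The one genuine difference is in which circle you use. The paper restricts a generic \emph{sub}circle $S\subset T^k$ (chosen with $M^S=M^{T^k}$ and $\langle\xi,w\rangle\neq 0$) to the component $F$, so that the $T^k$-weight $mw$ appears as the $S$-weight $m\langle\xi,w\rangle$ and must then be translated back to $mw$. You instead act by the \emph{quotient} circle $T^k/\ker w$, which makes the dictionary canonical: the $T^k$-weight $mw$ becomes the residual weight $m$, so the weight $w$ is singled out precisely by the label-$1$ edges, and no auxiliary $\xi$ is needed. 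Your bookkeeping via $W^+$ and the restriction to label-$1$ edges is also more explicit than the paper's ``repeating this argument for all weights over all fixed points,'' which leaves the avoidance of double counting implicit. Your emphasis on using the full (possibly disconnected) $\ker w$ rather than its identity component is exactly the point that justifies the paper's assertion that every $T^k$-weight in $T_qF$ is an integer multiple of $w$; the paper uses this fact without comment.
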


\begin{proof}
If $k=1$, (2) of this proposition is Lemma \ref{l21}, and (1) follows from (2). Therefore, from now on, we assume that $k \geq 2$.

Let $p$ be a fixed point and let $w$ be a weight at $p$. Let $F$ be a connected component of $M^{\ker w}$ which contains $p$. The component $F$ is a smaller dimensional $\ker w$-invariant compact almost complex submanifold of $M$.

Let $\xi$ be a non-zero integral element of the Lie algebra $\mathfrak{t}$ of $T^k$ not in $\ker w$ (i.e., $\xi \in \mathbb{Z}^k \cap (\mathfrak{t} \setminus \ker w')$), which generates a subcircle $S$ of $T^k$ that has the same fixed point set as the $T^k$-action on $M$, i.e., $M^S=M^{T^k}$. Such an $S$ exists because there is a finite number of orbit types. Replacing $\xi$ by $-\xi$ if necessary, we may assume that $\langle \xi,w \rangle$ is positive, where $\langle \cdot, \cdot \rangle$ denotes the usual inner product. The circle subgroup $S$ acts on $F$, and a point $q$ in $F$ is fixed by the $S$-action if and only if it is fixed by the $T^k$-action, that is, $F^S=F^{T^k}=F \cap M^{T^k}$. Moreover, if $q \in F$ is a $T^k$-fixed point, any $T^k$-weight in $T_qF$ is an integer multiple of $w$. If $w_{q,i}$ is a $T^k$-weight in $T_qF$, the corresponding $S$-weight is $\langle \xi,w_{q,i} \rangle$. 

Applying Lemma \ref{l21} to the $S$-action on $F$, there exists a labeled directed multigraph $\Gamma_F$ describing the $S$-action on $F$ that has no self-loops. Suppose that $(p_1,p_2)$ is an edge in $\Gamma_F$. Let $x=\langle \xi,w \rangle$. For $q \in F^S=F^{T^k}$, the $S$-weights in $T_qF$ are integer multiples of $x$, and hence the label of any edge in $\Gamma_F$ is an integer multiple of $x$. Suppose that $(p_1,p_2)$ is an edge in $\Gamma_F$ which has label $bx$ for some positive integer $b$. The $S$-weight $\pm bx$ only comes from the $T^k$-weight $\pm bw$. We replace the label $bx$ of the $(p_1,p_2)$-edge by $bw$, which is a weight of the $T^k$-action at $p_1$ (and $p_2$ has $T^k$-weight $-bw$). In particular, after doing this, $(p,p')$ is $w$-edge for some vertex (fixed point) $p'$ such that $p \neq p'$ and $p' \in F^{T^k}$.

Repeating this argument for all weights over all fixed points, we get a labeled directed multigraph describing $M$. This proves (2) of this proposition. Then (1) of this proposition follows from (2). \end{proof}

(1) of Proposition \ref{p22} implies the following result, which was proved for circle actions in \cite{H}.

\begin{cor} \label{c23}
Let a $k$-dimensional torus $T^k$ act on a $2n$-dimensional compact almost complex manifold with isolated fixed points. Then 
\begin{center}
$\displaystyle \sum_{p \in M^{S^1}} \sum_{i=1}^n w_{p,i}=0 \in \mathbb{Z}^k$.
\end{center}
\end{cor}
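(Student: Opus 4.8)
The plan is to reorganize the double sum $\sum_{p} \sum_{i} w_{p,i}$ by collecting, for each value $w \in \mathbb{Z}^k$, all the fixed points at which $w$ occurs as a weight, and then to pair each weight $w$ with its negative $-w$ so that part (1) of Proposition \ref{p22} applies directly.

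First I would rewrite the sum in terms of the multiplicity function $N_p(w)$. Since $N_p(w)$ counts how many of the $n$ weights $w_{p,1},\dots,w_{p,n}$ at $p$ equal $w$, grouping the inner sum by weight value gives
\begin{center}
$\displaystyle \sum_{p \in M^{T^k}} \sum_{i=1}^n w_{p,i} = \sum_{w \in \mathbb{Z}^k} w \cdot \Big( \sum_{p \in M^{T^k}} N_p(w) \Big)$,
\end{center}
where the outer sum is finite because $M$ has finitely many fixed points and hence only finitely many elements of $\mathbb{Z}^k$ arise as weights.

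Next I would exploit the symmetry $w \leftrightarrow -w$. Every weight is nonzero, so I would partition $\mathbb{Z}^k \setminus \{0\}$ into unordered pairs $\{w,-w\}$ and group the two corresponding terms:
\begin{center}
$\displaystyle w \cdot \sum_{p} N_p(w) + (-w) \cdot \sum_{p} N_p(-w) = w \cdot \Big( \sum_{p} N_p(w) - \sum_{p} N_p(-w) \Big)$.
\end{center}
By part (1) of Proposition \ref{p22} the two inner sums on the right are equal, so each such pair contributes $0$, and summing over all pairs $\{w,-w\}$ yields $0 \in \mathbb{Z}^k$, as claimed.

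There is essentially no obstacle here: the substantive content is entirely carried by part (1) of Proposition \ref{p22}, and what remains is the bookkeeping of reindexing the double sum by weight value and pairing $w$ with $-w$. The only point meriting a word of care is that weights are genuinely nonzero by the definition of the weights of a fixed-point component, so that the value $w=0$ need not be considered and the pairing $\{w,-w\}$ is well defined.
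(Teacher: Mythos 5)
Your proposal is correct and is precisely the argument the paper intends: the paper simply asserts that part (1) of Proposition \ref{p22} implies the corollary, and your reindexing of the double sum by weight value together with the pairing $\{w,-w\}$ is the standard bookkeeping that makes this implication explicit.
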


If a torus action on a compact almost complex manifold with isolated fixed points satisfies the GKM condition that weights at each fixed point are pairwise linearly independent (and the action is equivariantly formal), our multigraph is a GKM graph.

\begin{cor} \label{c24}
Let a $k$-dimensional torus $T^k$ act on a compact almost complex manifold $M$ with isolated fixed points. Suppose that weights at each fixed point are pairwise linearly independent. Then a multigraph describing $M$ is a GKM graph, forgetting the direction of each edge.
\end{cor}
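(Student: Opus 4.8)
The plan is to show that the multigraph $\Gamma$ produced by Proposition \ref{p22}, after forgetting the orientation of each edge, is precisely the GKM graph of $M$. By Proposition \ref{p22} such a $\Gamma$ exists and has no self-loops, and by Definition \ref{d12} its vertex set is $M^{T^k}$ and its labels record the weights at the fixed points; thus it remains to identify the edges of $\Gamma$ with the invariant $2$-spheres of $M$ and to check that the labels agree with the axial function of the GKM graph.

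First I would establish the local geometric picture forced by the GKM hypothesis. Fix a vertex $p$ and a weight $w$ at $p$, and let $Z$ be the connected component of the isotropy submanifold $M^{\ker w}$ containing $p$. The $T^k$-weights in $T_pZ$ are exactly those weights $w_{p,i}$ at $p$ that are fixed by $\ker w$, i.e. that are integer multiples of $w$. Since $w$ is itself a weight at $p$, pairwise linear independence forces $w$ to be the only weight at $p$ lying on the line $\mathbb{R}w$ (a repeated or proportional weight would be linearly dependent with $w$); hence $\dim_{\mathbb{C}} Z = 1$. Thus $Z$ is a compact connected almost complex surface carrying a $T^k$-action with nonempty isolated fixed set, and an Euler characteristic count (the number of fixed points of a generic subcircle $S$ with $Z^S=Z^{T^k}$ is positive, so the genus is $0$) shows that $Z$ is diffeomorphic to $S^2$, on which the torus acts by rotation with exactly two fixed points $p$ and $q$, the tangent weights being $w$ at $p$ and $-w$ at $q$.

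Next I would use this to match edges. The invariant $2$-spheres $Z$ just described are exactly the edges of the GKM graph: each carries two fixed points and is labeled by the axial value $\pm w$. On the side of $\Gamma$, condition (ii) of Definition \ref{d12} makes each occurrence of a weight $w$ at $p$ correspond to a unique half-edge at $p$, and condition (iii) requires the two endpoints of a $w$-edge to lie in the same component of $M^{\ker w}$. By the previous paragraph that component is the sphere $Z$, whose only fixed point other than $p$ is $q$; hence the half-edge must terminate at $q$ and carry label $w$. This sets up a label-preserving bijection between the edges of $\Gamma$ and the invariant $2$-spheres of $M$, under which the orientation of a $\Gamma$-edge merely records at which endpoint the weight is $+w$ rather than $-w$. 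Forgetting the orientation therefore yields the GKM graph, with the GKM axial function recovered from the labels.

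The main obstacle is the geometric input of the second paragraph: confirming that each relevant isotropy component is genuinely a $2$-sphere and that it contains exactly two fixed points with opposite weights, since this is what rules out multiple edges on a single sphere and guarantees the identification is one-to-one. Once this is in hand, the bijection between $\Gamma$-edges and invariant $2$-spheres, and hence the identification with the GKM graph, is essentially bookkeeping through conditions (ii) and (iii) of Definition \ref{d12}.
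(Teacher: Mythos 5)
Your proposal is correct and follows essentially the same route as the paper: for each weight $w$ at a fixed point $p$, pairwise linear independence forces the component of $M^{\ker w}$ through $p$ to be a compact connected almost complex $2$-manifold, which is then identified as an invariant $2$-sphere with exactly two fixed points carrying weights $w$ and $-w$, and these spheres are matched with the edges of the multigraph. Your write-up merely supplies more detail than the paper's (the Euler characteristic argument for $S^2$ and the explicit bookkeeping via conditions (ii) and (iii) of Definition \ref{d12}), but the underlying argument is the same.
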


\begin{proof}
Let $p$ be a fixed point and let $w$ be a weight at $p$. As in the proof of \ref{p22}, we consider a connected component $F$ of $M^{\ker w}$ which contains $p$. Since weights at $p$ are pairwise linearly independent, it follows that $F$ is $2$-dimensional. The torus action on $M$ restricts to act on a compact almost complex manifold $F$, having $p$ as a fixed point. It follows that $F$ is the $2$-sphere and has another fixed point $p$. We draw an edge $e$ from $p$ to $q$, giving a label $w$. If we repeat this argument for all weights over all fixed points, the associated multigraph is a GKM graph. \end{proof}

\begin{rem}\label{r25} Let a torus $T^k$ act on a compact almost complex manifold $M$ with isolated fixed points. Suppose that a multigraph $\Gamma$ describes $M$. Let $e$ be an edge of $\Gamma$, and let $i(e)$, $t(e)$, and $w(e)$ be the initial vertex, the terminal vertex, and the label of the edge $e$, respectively. Then the fixed point $i(e)$ has weight $w(e)$ and the fixed point $t(e)$ has weight $-w(e)$.

Now, let $\Gamma'$ be a multigraph obtained from $\Gamma$ by changing the direction of $e$ and changing the label $w(e)$ of $e$ by its negative $-w(e)$. Let $e'$ be the edge obtained from $e$ by reversing the direction of $e$. Then $w(e')=-w(e)$, $i(e')=t(e)$, and $t(e')=i(e)$. Since the label of $e'$ is $-w(e)$, $t(e')=i(e)$ has weight $-w'(e)=-(-w(e))=w(e)$ and $i(e')=t(e)$ has weight $w(e')=-w(e)$. Therefore, $\Gamma'$ also describes $M$.

Therefore, when a multigraph $\Gamma$ describes $M$, if we change the direction of an edge and the label of the edge by its negative simultaneously, the resulting multigraph also describes $M$.
\end{rem}

\begin{figure}
\centering
\begin{subfigure}[b][6.5cm][s]{.4\textwidth}
\centering
\begin{tikzpicture}[state/.style ={circle, draw}]
\node[state] (a) {$p_{0}$};
\node[state] (b) [above right=of a] {$p_1$};
\node[state] (c) [above=of b] {$p_2$};
\node[state] (d) [above left=of c] {$p_3$};
\path (a) [->] edge node[right] {$(1,0)$} (b);
\path (a) [->] edge node[left] {$(2,0)$} (c);
\path (a) [->] edge node[left] {$(0,1)$} (d);
\path (b) [->] edge node [right] {$(1,0)$} (c);
\path (b) [->] edge node [left] {$(-1,1)$} (d);
\path (c) [->] edge node [right] {$(-2,1)$} (d);
\end{tikzpicture}
\caption{Multigraph describing $\mathbb{CP}^3$ in Example \ref{e26}}\label{fig2-1}
\end{subfigure}
\begin{subfigure}[b][6.5cm][s]{.4\textwidth}
\centering
\vfill
\begin{tikzpicture}[state/.style ={circle, draw}]
\node[state] (a) {$p_1$};
\node[state] (b) [above right=of a] {$p_2$};
\node[state] (c) [above=of b] {$p_3$};
\node[state] (d) [above left=of c] {$p_4$};
\path (a) [->] [bend left =10] edge node[right] {$2$} (d);
\path (a) [->] [bend right =10] edge node [right] {$3$} (d);
\path (a) [->] edge node [right] {$1$} (b);
\path (b) [->] [bend left=20] edge node [right] {$1$} (c);
\path (b) [->] [bend right =20]  edge node [right] {$a$} (c);
\path (c) [->] edge node [right] {$1$} (d);
\end{tikzpicture}
\vfill
\caption{Multigraph describing Fano 3-fold in Example \ref{e27}}\label{fig2-2}
\end{subfigure}
\caption{Multigraphs for non-GKM manifolds}\label{fig2}
\end{figure}
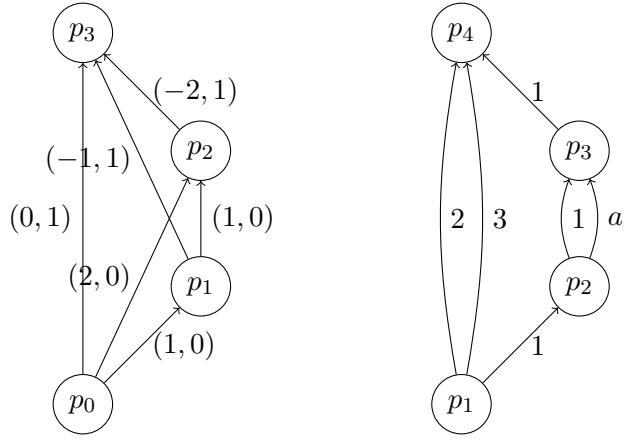

We give examples of torus actions on compact (almost) complex manifolds with isolated fixed points, which does not satisfy the GKM condition.

\begin{exa} \label{e26}
Let $T^2$ act on $\mathbb{CP}^3$ by
\begin{center}
$(g_1,g_2) \cdot [z_0:z_1:z_2:z_3]=[z_0:g_1 z_1: g_1^2 z_2: g_2 z_3]$ 
\end{center}
for all $g=(g_1,g_2) \in T^2$. The action has 4 fixed points, $p_0=[1:0:0:0]$, $p_1=[0:1:0:0]$, $p_2=[0:0:1:0]$, and $p_3=[0:0:0:1]$. 

Near $p_2$, using local coordinates $(\frac{z_0}{z_2},\frac{z_1}{z_2},\frac{z_3}{z_2})$, $T^2$ acts near $p_2$ by
\begin{center}
$(g_1,g_2) \cdot (\frac{z_0}{z_2},\frac{z_1}{z_2},\frac{z_3}{z_2})=(\frac{z_0}{g_1^2 z_2},\frac{g_1 z_1}{g_1^2 z_2},\frac{g_2 z_3}{g_1^2 z_2})=(g_1^{-1} \frac{z_0}{z_2},g_1^{-1} \frac{z_1}{z_2},g_1^{-2} g_2 \frac{z_3}{z_2})$
\end{center}
and hence the weights at $p_2$ are $\{(-2,0), (-1,0), (-2,1)\}$. Since the weights at $p_2$ are not pairwise linearly independent, $\mathbb{CP}^3$ with this action is not a GKM manifold; unlike GKM manifolds, a component of $M^{\ker (-1,0)}$ containing $p_2$ is not a 2-sphere (which is the case for a GKM manifold), but it is $\mathbb{CP}^2=\{[z_0:z_1:z_2:0]\}$.

The weights at the fixed points $p_0$, $p_1$, and $p_3$ are $\{(1,0), (2,0), (0,1)\}$, $\{(-1,0),(1,0),(-1,1)\}$, and $\{(0,-1),(1,-1),(2,-1)\}$, respectively. Figure \ref{fig2-1} describes $\mathbb{CP}^3$ for this action. 

On the other hand, we can give another $T^2$-action on $\mathbb{CP}^3$ so as to be a GKM manifold, for instance 
\begin{center}
$(g_1,g_2) \cdot [z_0:z_1:z_2:z_3]=[z_0:g_1^2 z_1: g_1^3 z_2: g_2 z_3]$.
\end{center}
\end{exa}

\begin{exa} \label{e27}
In \cite{A}, Ahara explicitly described circle actions on the Fano 3-folds $V_5$ and $V_{22}$ with 4 fixed points that have weights
\begin{center}
$\{1,2,3\}$, $\{-1,1,a\}$, $\{-1,-a,1\}$, $\{-1,-2,-3\}$,
\end{center}
where $a=4$ for $V_5$ and $a=5$ for $V_{22}$. Figure \ref{fig2-2} is a multigraph describing the manifold. McDuff provided compatible symplectic structures on these actions on $V_5$ and $V_{22}$ \cite{Mc}.
\end{exa}

Let the circle act symplectically on a compact symplectic manifold. In \cite{T}, Tolman proved that if $p$ and $p'$ are fixed points that are in the same connected component of $M^{\mathbb{Z}_k}$, then the $S^1$-weights at $p$ and at $p'$ are equal modulo $k$. The proof naturally holds for almost complex manifolds \cite{GS} and unitary manifolds \cite{Jan4}. We review the proof to extend the result to torus actions on unitary manifolds. Let a torus $T^k$ act on a compact unitary manifold $M$. Let $p$ and $p'$ be fixed points which lie in the same connected component $F$ of $M^{\ker w}$. Since $\ker w$ fixes $F$, the weights of the representation $\ker w$ on the tangent space $T_qM$ are the same for all $q \in F$. If $q \in F$ is fixed by the action of the torus $T^k$, the weights of the $\ker w$-action on $T_qM$ are the reduction modulo $w$ of the $T^k$-weights. Therefore, the $T^k$-weights at $p$ and $p'$ are equal modulo $w$, for $p, p' \in F$.

\begin{lemma} \label{l27}
Fix an element $w$ in $\mathbb{Z}^k$. Let a $k$-dimensional  torus $T^k$ act on a compact unitary manifold $M$. Let $p$ and $p'$ be fixed points which lie in the same component of $M^{\ker w}$. Then the $T^k$-weights at $p$ and at $p'$ are equal modulo $w$.
\end{lemma}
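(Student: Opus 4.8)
The plan is to use the fact that $\ker w$ fixes pointwise the whole connected submanifold through $p$ and $p'$, so that its fiberwise representation on the (stable) tangent bundle is forced to be constant along this connected set; comparing the resulting $\ker w$-weights at $p$ and $p'$ and then relating them to the ambient $T^k$-weights produces the congruence modulo $w$.

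First I would let $F$ be the connected component of $M^{\ker w}$ containing $p$ and $p'$, and consider the complex vector bundle $E=TM\oplus\underline{\mathbb{R}}^{\ell}$ carrying the unitary structure. Since $T^k$ preserves this structure and $\ker w$ fixes $F$ pointwise, $\ker w$ acts fiberwise and $\mathbb{C}$-linearly on $E|_F$, covering the identity of $F$. Being compact and abelian, $\ker w$ decomposes each fiber $E_q$ into one-dimensional character spaces, and the corresponding isotypic subbundles (cut out by the averaging projections $\int_{\ker w}\overline{\mu(g)}\,g\,dg$) have continuously varying, hence locally constant, rank over the connected set $F$. Therefore the multiset of $\ker w$-weights of $E_q$ is independent of $q\in F$; in particular it agrees at $q=p$ and $q=p'$.

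Next I would translate these $\ker w$-weights into $T^k$-weights. At a $T^k$-fixed point $q\in\{p,p'\}$ the torus acts on $E_q$ with weights $\{w_{q,i}\}$ together with a fixed, $q$-independent contribution from the globally trivial summand $\underline{\mathbb{R}}^{\ell}$, and restricting the action to $\ker w$ sends each $T^k$-weight $\alpha\in\mathbb{Z}^k=\widehat{T^k}$ to its image under the restriction map $r:\widehat{T^k}\to\widehat{\ker w}$. Dualizing the short exact sequence $1\to\ker w\to T^k\xrightarrow{g\mapsto g^{w}}S^1\to 1$ (which is exact because $w\neq 0$) identifies $r$ with the quotient map $\mathbb{Z}^k\to\mathbb{Z}^k/\mathbb{Z}w$, so $\ker r=\mathbb{Z}w$ and two weights restrict to the same character of $\ker w$ exactly when they are congruent modulo $w$. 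Because the contribution of $\underline{\mathbb{R}}^{\ell}$ is identical at $p$ and at $p'$, it drops out of the comparison, leaving the multisets $\{w_{p,i}\}$ and $\{w_{p',i}\}$.

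Combining the two steps, the equality of the $\ker w$-weight multisets of $E_p$ and $E_{p'}$ together with $\ker r=\mathbb{Z}w$ forces $\{w_{p,i}\bmod w\}=\{w_{p',i}\bmod w\}$, which is the assertion. I expect the one delicate point to be the local constancy in the first step, namely the argument that the isomorphism type of the fiber representation of the compact group $\ker w$ cannot jump along $F$; this rests on the isotypic subbundles having continuously varying (hence locally constant) rank, which connectedness of $F$ upgrades to genuine constancy. The identification $\ker r=\mathbb{Z}w$ is the second ingredient and, via Pontryagin duality, requires no separate treatment of the case where $w$ is non-primitive and $\ker w$ is disconnected.
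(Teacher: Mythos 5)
Your proof is correct and takes essentially the same route as the paper's: constancy of the $\ker w$-representation on the (stable) tangent spaces along the connected component $F$ of $M^{\ker w}$, combined with the identification of restriction of $T^k$-characters to $\ker w$ with reduction modulo $w$. You merely supply the standard details (isotypic projections and local constancy of rank, Pontryagin duality handling non-primitive $w$) that the paper leaves implicit.
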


With the above, we show that a multigraph describing an almost complex torus manifold is a graph.

\begin{pro} \label{p28}
Let $M$ be an almost complex torus manifold. Then there is a graph describing $M$ that has no self-loops. In other words, there is a multigraph describing $M$ that has no multiple edges and no self-loops.
\end{pro}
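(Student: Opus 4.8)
The plan is to start from Proposition~\ref{p22}, which already furnishes a multigraph $\Gamma$ describing $M$ with no self-loops; it remains only to rule out multiple edges. The first thing I would record is that, since the $T^n$-action is effective and $\dim M = 2n$, the isotropy representation at each fixed point $p$ is faithful, so the $n$ weights $w_{p,1},\dots,w_{p,n}$ are linearly independent over $\mathbb{Q}$. In particular they are pairwise linearly independent, so $M$ is GKM and the argument of Corollary~\ref{c24} applies: for every weight $w$ at $p$, the component $F$ of $M^{\ker w}$ through $p$ has complex tangent dimension $1$ at $p$ (only the weight $w$ itself is a rational multiple of $w$), hence $F$ is an invariant $2$-sphere with $p$ and some $q$ as its two poles, and $q$ carries the weight $-w$. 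Because Definition~\ref{d12}(ii) forces the half-edges at $p$ to be in bijection with the weights at $p$, every edge of a describing multigraph corresponds to such an invariant $2$-sphere joining its endpoints, and the number of edges between two fixed points equals the number of invariant $2$-spheres joining them. So it suffices to prove that at most one invariant $2$-sphere joins any pair of fixed points.

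Next I would argue by contradiction: suppose two distinct invariant $2$-spheres $F_1,F_2$ both join $p$ and $q$, corresponding to weights $w_1,w_2$ at $p$ (so $-w_1,-w_2$ at $q$), with $w_1,w_2$ linearly independent. To isolate these two weights I would pass to $N$, the component through $p$ of $M^{\ker w_1 \cap \ker w_2}$, on which the quotient torus $T' = T^n/(\ker w_1 \cap \ker w_2) \cong T^2$ acts. Since the weights at $p$ are linearly independent, exactly two of them, namely $w_1$ and $w_2$, lie in the plane $\langle w_1,w_2\rangle_{\mathbb{Q}}$, so $N$ is $4$-dimensional with $T'$-weights $w_1,w_2$ at $p$; as $w_1,w_2$ span $\hat{T'}\otimes\mathbb{Q}$ the $T'$-action on $N$ is effective, making $N$ a $4$-dimensional almost complex torus manifold. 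The same dimension count at $q$ shows its $T'$-weights are $-w_1,-w_2$, and since $F_1,F_2 \subseteq N$ and $q \in N$, within $N$ the spheres $F_1,F_2$ still join $p$ and $q$.

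Finally I would apply Lemma~\ref{l27} inside $N$ to the common component $F_1$ of $N^{\ker w_1}$: the $T'$-weights at $p$ and at $q$ coincide modulo $w_1$. Reducing, $\{0,\,w_2\} \equiv \{0,\,-w_2\} \pmod{w_1}$ as multisets, and since $w_2 \not\equiv 0$ (as $w_1,w_2$ are independent) the nonzero entries must match, giving $w_2 \equiv -w_2 \pmod{w_1}$, i.e. $2w_2 \in \langle w_1 \rangle$, contradicting the linear independence of $w_1$ and $w_2$. Hence no two invariant $2$-spheres join $p$ and $q$, so $\Gamma$ has no multiple edges, and together with the absence of self-loops from Proposition~\ref{p22} this yields the desired graph. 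The main obstacle is the bookkeeping in Lemma~\ref{l27}: applied directly in $M$ it only yields a matching of the full multiset of $n$ weights modulo $w_1$, which does not by itself force the weight $w_2$ at $p$ to be paired with $-w_2$ at $q$; passing to the $4$-dimensional fixed submanifold $N$ is exactly what discards the irrelevant weights and makes the pairing forced.
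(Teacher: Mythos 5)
Your proof is correct, and it rests on the same two pillars as the paper's: Proposition \ref{p22} to get a multigraph without self-loops, and Lemma \ref{l27} to derive the final contradiction that $2w_2$ would be an integer multiple of $w_1$, which is impossible because the weights at a fixed point are linearly independent (indeed a basis of $\mathbb{Z}^n$, by effectiveness). The difference lies in how the congruence $w_2\equiv -w_2 \pmod{w_1}$ is forced. The paper applies Lemma \ref{l27} directly in $M$ along the component of $M^{\ker w_{p,1}}$ containing $p$ and $q$, obtaining a bijection $\pi$ of all $n$ weights modulo $w_{p,1}$, and then disposes of the possible values of $\pi$ by a short case analysis ($\pi(1)=1$ is forced; then $\pi(2)=2$ and $\pi(2)\notin\{1,2\}$ each contradict the basis property of the weights at $p$ or at $q$). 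You instead pass to the component $N$ of $M^{\ker w_1\cap\ker w_2}$ through $p$, check that it is $4$-dimensional with weights exactly $w_1,w_2$ at $p$ and $-w_1,-w_2$ at $q$, and apply Lemma \ref{l27} there, where the matching of the two nonzero residues is automatic. Your reduction buys you a cleaner, case-free endgame and makes the geometric picture (two invariant $2$-spheres inside a $4$-dimensional isotropy submanifold) explicit; the paper's version avoids having to introduce and verify the properties of the auxiliary submanifold $N$ at the cost of the combinatorial case analysis, and in fact its case $\pi(2)\notin\{1,2\}$ shows directly that a mismatched pairing already forces a linear dependence among the weights at $q$. Both arguments are complete; one small remark on yours is that the identification of edges with invariant $2$-spheres (via Definition \ref{d12}(iii) and the observation of Corollary \ref{c24}) is a convenient framing but not strictly necessary -- all you use is that two edges between $p$ and $q$ yield two independent weights $w_1,w_2$ at $p$, with $-w_1,-w_2$ at $q$, such that $p$ and $q$ lie in a common component of each $M^{\ker w_i}$.
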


\begin{proof}
By Proposition \ref{p22}, there exists a multigraph $\Gamma$ describing $M$ that has no self-loops. Suppose on the contrary that there are multiple edges $e_1$ and $e_2$ between two vertices $p$ and $q$. By changing the direction of $e_i$ and by changing the label $w(e_i)$ of $e_i$ by its negative $-w(e_i)$ simultaneously, we may assume that the edges $e_1$ and $e_2$ have initial vertex $p$ and terminal vertex $q$; see Remark \ref{r25}. Let $w_{p,1}$, $\cdots$, $w_{p,n}$ be the weights at $p$ and let $w_{q,1}$, $\cdots$, $w_{q,n}$ be the weights at $q$, where $\dim M=2n$. By permuting $w_{p,i}$ and by permuting $w_{q,i}$ if necessary, we may assume that $e_1$ corresponds to $w_{p,1}$ and $w_{q,1}$, and $e_2$ corresponds to $w_{p,2}$ and $w_{q,2}$, that is, $w(e_1)=w_{p,1}$, $-w(e_1)=w_{q,1}$, $w(e_2)=w_{p,2}$, and $-w(e_2)=w_{q,2}$. In other words, $w_{q,1}=-w_{p,1}$ and $w_{q,2}=-w_{p,2}$.

Since $\Gamma$ describes $M$, $p$ and $q$ are in the same component of $M^{\ker w_{p,1}}$. Therefore, by Lemma \ref{l27}, the weights at $p$ and at $q$ are equal modulo $w_{p,1}$. That is, there exists a bijection $\pi:\{1,\cdots,n\} \to \{1,\cdots,n\}$ such that $w_{p,i} \equiv w_{q, \pi(i)} \mod w_{p,1}$. 

We show that $\pi(1)=1$. For this, assume on the contrary that $\pi(1) \neq 1$. Then $w_{p,1} \equiv w_{q ,\pi(1)} \mod w_{p,1}$ and hence $w_{q, \pi(1)}$ is a multiple of $w_{p,1}$. Then $w_{q, \pi(1)}$ is a multiple of $w_{q,1}(=-w_{p,1})$, which leads to a contradiction since $\{w_{q,1},\cdots,w_{q,n}\}$ form a basis of $\mathbb{Z}^n$. Hence $\pi(1)=1$. 

Suppose that $\pi(2)=2$. Then we have $w_{p,2} \equiv w_{q,2} \mod w_{p,1}$. Since $w_{q,2}=-w_{p,2}$, this means that $w_{p,2} \equiv -w_{p,2} \mod w_{p,1}$ and hence $2w_{p,2}=a w_{p,1}$ for some integer $a$. This leads to a contradiction since $\{w_{p,1}, w_{p,2}=\frac{a w_{p,1}}{2} ,\cdots,w_{p,n}\}$ must form a basis of $\mathbb{Z}^n$.

Next, suppose that $\pi(2)>2$. Then we have $w_{p,2} \equiv w_{q,\pi(2)} \mod w_{p,1}$ and hence $w_{q,\pi(2)}=w_{p,2}+aw_{p,1}$ for some integer $a$. Then $w_{q,\pi(2)}=w_{p,2}+aw_{p,1}=-w_{q,2}-aw_{q,1}$ and hence $\{w_{q,1},w_{q,2},\cdots,w_{q,\pi(2)}=-w_{q,2}-aw_{q,1}, \cdots,w_{q,n}\}$ cannot form a basis of $\mathbb{Z}^n$. Therefore, $\Gamma$ has no multiple edges. \end{proof}

A geometric interpretation of Proposition \ref{p28} is as follows.

\begin{pro} \label{p29}
Let $M$ be a $2n$-dimensional almost complex torus manifold. Then for each fixed point $p$, there are $n$ 2-spheres on each of which the $T^n$-action on $M$ restricts to act, sharing $p$ as one fixed point in common with weight $w_{p,i}$, but no two of them share other fixed points. Here, $w_{p,1}$, $\cdots$, $w_{p,n}$ are the weights at $p$.
\end{pro}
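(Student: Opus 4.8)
The plan is to realize the combinatorial statement of Proposition \ref{p28} geometrically: each weight at a fixed point will be carried by an embedded invariant $2$-sphere, and the absence of multiple edges will keep these spheres from meeting a second time.

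First I would fix a fixed point $p$ with weights $w_{p,1},\dots,w_{p,n}$ and, for each $i$, let $F_i$ be the connected component of the isotropy submanifold $M^{\ker w_{p,i}}$ containing $p$. As is already used in the proof of Proposition \ref{p28}, the weights at $p$ form a basis of $\mathbb{Z}^n$ and are in particular pairwise linearly independent; exactly as in the proof of Corollary \ref{c24}, this forces the $\ker w_{p,i}$-fixed subspace of $T_pM$ to be the single complex weight line carrying $w_{p,i}$, so $F_i$ is $2$-dimensional. Since $w_{p,i}$ is primitive, $T^n/\ker w_{p,i}\cong S^1$ acts on the compact connected almost complex surface $F_i$ with $p$ as an isolated fixed point, which forces $F_i\cong S^2$. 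The induced action then has exactly two fixed points, $p$ and a second point $q_i$, with weight $w_{p,i}$ at $p$; this yields the $n$ desired spheres through $p$.

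It then remains to check that no two of these spheres share a second fixed point, and this is where I would invoke the graph of Proposition \ref{p28}. Each sphere $F_i$ is precisely the $2$-sphere realizing the edge of the describing graph $\Gamma$ with label $\pm w_{p,i}$, joining $p$ to $q_i$. If $q_i=q_j$ for some $i\neq j$, these would be two distinct edges between the same pair of vertices---distinct because $w_{p,i}$ and $w_{p,j}$ are linearly independent, so neither edge is the other read backwards---contradicting that $\Gamma$ has no multiple edges. Hence $q_i\neq q_j$ whenever $i\neq j$.

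I expect the only delicate point to be the bookkeeping that matches each sphere $F_i$ with a genuine edge of $\Gamma$ and confirms that distinct weights produce genuinely distinct edges rather than a single edge traversed in opposite directions; once that identification is made carefully, the no-multiple-edges conclusion of Proposition \ref{p28} finishes the argument immediately.
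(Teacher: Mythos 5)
Your proposal is correct and follows essentially the same route as the paper: both arguments identify the $n$ spheres as the components of $M^{\ker w_{p,i}}$ through $p$ (two-dimensional because the weights at $p$ form a basis of $\mathbb{Z}^n$) and both derive the distinctness of the second fixed points from the no-multiple-edges conclusion of Proposition \ref{p28}. The only difference is order of presentation --- the paper reads off the distinct endpoints $q_i$ from the graph first and then exhibits the spheres, while you build the spheres first and then rule out coincidences via the graph --- which is immaterial.
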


\begin{proof}
By Proposition \ref{p28}, there is a graph $\Gamma$ describing $M$ that has no self-loops. Therefore, if we let $e_i$ be an edge of $\Gamma$ corresponding to the weight $w_{p,i}$ at $p$ and let $q_i$ be the other vertex of the edge $e_i$, then $q_1$, $\cdots$, $q_n$ are all distinct. Since $w_{p,1}$, $\cdots$, $w_{p,n}$ form a basis of $\mathbb{Z}^n$, for each $i \in \{1,\cdots,n\}$, a connected component $F_i$ of $M^{\ker w_{p,i}}$ containing $p$ and $q_i$ is a $2$-dimensional compact almost complex submanifold $M$. On $F_i$, the $T^n$-action on $M$ restricts to act, and this $T^n$-action on $F_i$ has fixed point $p$ and $q_i$ that have weight $w_{p,i}$ and $-w_{p,i}$. Therefore, $F_i$ is the 2-sphere. \end{proof}

For an almost complex torus manifold, since weights at each fixed point form a basis of $\mathbb{Z}^n$, they are pairwise linearly independent, so that it satisfies the GKM condition that; thus Corollary \ref{c24} and Proposition \ref{p28} imply the following.

\begin{cor}
Let $M$ be an almost complex torus manifold. Then the associated GKM graph has no multiple edges.
\end{cor}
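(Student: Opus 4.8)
The plan is to combine a quick verification of the GKM condition with the two structural results already in hand. First I would observe that for an almost complex torus manifold the $T^n$-action on the tangent space $T_pM \cong \mathbb{C}^n$ at a fixed point $p$ is effective, so the $n$ weights $w_{p,1}, \dots, w_{p,n}$ must span $\mathbb{Z}^n$; being $n$ elements that span a rank-$n$ lattice, they form a basis. In particular any two of them are linearly independent, so $M$ satisfies the GKM condition that weights at each fixed point are pairwise linearly independent.

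Having verified the GKM condition, I would apply Corollary \ref{c24} to conclude that a multigraph describing $M$ is a GKM graph (after forgetting the orientation of each edge), and apply Proposition \ref{p28} to produce a multigraph describing $M$ that has no multiple edges. The remaining task is to argue that these two refer to the same underlying graph, so that the no-multiple-edges property of the latter transfers to the GKM graph.

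The only point requiring care is the uniqueness of the describing graph for a GKM manifold. Here I would invoke condition (iii) of Definition \ref{d12}: for an edge $e$ with label $w(e)=w_{p,i}$ emanating from $p$, the other endpoint must lie in the same connected component $F$ of $M^{\ker w_{p,i}}$ as $p$. Because the weights at $p$ are pairwise linearly independent, $F$ is a $2$-sphere (exactly as in the proofs of Corollary \ref{c24} and Proposition \ref{p29}) and hence contains precisely one other fixed point $q_i$. Thus the opposite endpoint of the edge is forced to be $q_i$, so the edge set of any multigraph describing $M$ is completely determined by the fixed-point weight data; it is precisely the GKM graph. Consequently the multigraph furnished by Proposition \ref{p28}, which has no multiple edges, coincides with the associated GKM graph, and the corollary follows.
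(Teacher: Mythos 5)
Your proposal is correct and follows essentially the same route as the paper, which also notes that the weights at each fixed point form a basis of $\mathbb{Z}^n$ (hence are pairwise linearly independent, so the GKM condition holds) and then combines Corollary \ref{c24} with Proposition \ref{p28}. Your additional argument that the describing graph is unique up to edge orientation (via condition (iii) of Definition \ref{d12} and the $2$-sphere components of $M^{\ker w_{p,i}}$) is a correct and welcome elaboration of a step the paper leaves implicit.
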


\section{Hirzebruch genera of almost complex torus manifolds} \label{s3}

As for almost complex manifolds, for a compact unitary manifold its Hirzebruch $\chi_y$-genus is defined as the genus of the power series $\frac{x(1+ye^{-x(1+y)})}{1-e^{-x(1+y)}}$. Let the circle group $S^1$ act on a compact unitary manifold. For a fixed component $F$ of the action, let $d(-,F)$ ($d(+,F)$) be the number of negative (positive) weights of $F$, respectively. In \cite{Kos}, Kosniowski proved the following formula for the Hirzebruch $\chi_y$-genus of $M$, called the \textbf{Kosniowski formula}.

\begin{center}
$\displaystyle \chi_y(M)=\sum_{F \subset M^{S^1}} (-y)^{d(-,F)} \cdot \chi_y(F)=\sum_{F \subset M^{S^1}} (-y)^{d(+,F)} \cdot \chi_y(F)$.
\end{center}

Kosniowski first proved this formula for holomorphic vector fields on complex manifolds \cite{Kos}. Hattori and Taniguchi proved the same formula for unitary manifolds \cite{HT}. Kosniowski and Yahia later reproved the same result \cite{KY}. 

\begin{theo}[Kosniowski formula] \cite{HT}, \cite{KY} \label{t31}
Let the circle act on a compact unitary manifold $M$. For each connected component $F$ of the fixed point set $M^{S^1}$, let $d(-,F)$ and $d(+,F)$ be the numbers of negative weights and positive weights in the normal bundle $NF$ of $F$, respectively. Then the Hirzebruch $\chi_y$-genus $\chi_y(M)$ of $M$ satisfies
\begin{center}
$\displaystyle \chi_y(M)=\sum_{F \subset M^{S^1}} (-y)^{d(-,F)} \cdot \chi_y(F)=\sum_{F \subset M^{S^1}} (-y)^{d(+,F)} \cdot \chi_y(F)$.
\end{center}
\end{theo}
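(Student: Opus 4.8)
The plan is to prove the Kosniowski formula by equivariant localization, recovering the two stated expressions as the two limits $t \to 0$ and $t \to \infty$ of a single localized rational function. First I would realize $\chi_y(M)$ as the index of the $\chi_y$-operator (the twisted Dolbeault-type operator whose index is $\sum_p y^p\,\chi(M,\Lambda^p T^*M)$), which for a merely unitary $M$ is defined through the stable complex structure on $TM \oplus \underline{\mathbb{R}}^{2\ell}$. The $S^1$-action promotes this to an equivariant index $\chi_y^{S^1}(M)(t)$, which is a genuine character of $S^1$, i.e. a Laurent polynomial in $t$ with coefficients in $\mathbb{Z}[y]$, whose evaluation at $t=1$ is the ordinary $\chi_y(M)$.

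Next I would apply the Atiyah--Singer equivariant index theorem to this operator. Writing $M^{S^1}=\bigsqcup_F F$ and decomposing each normal bundle $NF=\bigoplus_j N_j$ into $S^1$-weight lines, where $N_j$ has weight $m_j \neq 0$ and Chern root $x_j$, the tangential data integrate over $F$ to the ordinary $\chi_y(F)$ (since $S^1$ acts trivially on $F$, no $t$ enters the tangential part), while each normal line contributes a factor $\frac{1 + y\,t^{m_j}e^{-x_j}}{1 - t^{m_j}e^{-x_j}}$. This yields the rational-function identity
\[ \chi_y^{S^1}(M)(t) = \sum_{F \subset M^{S^1}} \int_F \Big[(\text{tangential }\chi_y\text{-form of }F)\cdot \prod_j \frac{1 + y\,t^{m_j}e^{-x_j}}{1 - t^{m_j}e^{-x_j}}\Big] =: \Phi(t). \]

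Because $\Phi(t)$ equals the equivariant index, it is in fact a Laurent polynomial, so it suffices to compute its two limits. As $t \to 0$, each normal factor tends to $1$ when $m_j > 0$ and to $-y$ when $m_j < 0$ (the unipotent factor $e^{-x_j}$ being absorbed in the limit), so the normal product becomes the scalar $(-y)^{d(-,F)}$ and pulls out of the integral, leaving $\int_F(\text{tangential }\chi_y\text{-form}) = \chi_y(F)$; hence $\lim_{t\to0}\Phi(t) = \sum_F (-y)^{d(-,F)}\chi_y(F)$. Symmetrically, as $t \to \infty$ the roles of positive and negative weights swap and $\lim_{t\to\infty}\Phi(t) = \sum_F (-y)^{d(+,F)}\chi_y(F)$. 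Since both limits are finite, the Laurent polynomial $\Phi$ has neither negative nor positive powers of $t$, so it is the constant $\Phi(1)=\chi_y(M)$; equating each limit with this constant gives the two asserted formulas at once.

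The main obstacle I expect is the local computation in the unitary (stably, not almost, complex) setting: pinning down the normal contribution $\frac{1+y\,t^{m_j}e^{-x_j}}{1-t^{m_j}e^{-x_j}}$ and verifying that the tangential integral is exactly $\chi_y(F)$, together with the justification that the localized expression is a genuine Laurent polynomial, which is what legitimizes deducing constancy from the finiteness of both limits. Once these are in place the two limit evaluations are routine. For a fully self-contained unitary argument one can instead follow the holomorphic-Lefschetz bookkeeping of Hattori--Taniguchi, but the localization route above makes the symmetry between $d(-,F)$ and $d(+,F)$ transparent.
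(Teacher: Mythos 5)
The paper does not actually prove Theorem \ref{t31}: it is imported verbatim from the literature with citations to Hattori--Taniguchi \cite{HT} and Kosniowski--Yahia \cite{KY}, so there is no in-paper argument to compare against line by line. Your proposal is a correct proof in outline, and it is essentially Kosniowski's original route (the equivariant/holomorphic Lefschetz fixed point formula plus the observation that a Laurent polynomial in the character variable $t$ with finite limits at $t\to 0$ and $t\to\infty$ is constant), whereas the two cited references for the unitary case argue instead through equivariant unitary bordism: they show the $S^1$-equivariant bordism class is detected by fixed point data and verify the formula on generators. Your route buys the transparent symmetry between $d(-,F)$ and $d(+,F)$ as the two limits of one rational function; the bordism route avoids having to realize $\chi_y$ as an analytic index at all, which is exactly where your remaining work lies. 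Two points you should not gloss over. First, for a merely unitary $M$ the operator whose index is $\chi_y(M)$ is not the naive Dolbeault complex; you must pass through the ${\rm Spin}^c$ Dirac operator determined by the stable complex structure on $TM\oplus\underline{\mathbb{R}}^{2\ell}$, twisted so that the cohomological index is the genus of $\frac{x(1+ye^{-x})}{1-e^{-x}}$, and check that the equivariant index is a genuine (finite) virtual character --- this is the step that legitimizes the Laurent-polynomial claim. Second, the ``tangential part integrates to $\chi_y(F)$'' assertion requires identifying the stable complex structure induced on $F$: the restriction of $J$ gives a complex structure on $TF\oplus NF\oplus\underline{\mathbb{R}}^{2\ell}$, and you must check that the weight-space splitting of $NF$ is $J$-invariant so that the complement $TF\oplus\underline{\mathbb{R}}^{2\ell}$ inherits the stable complex structure with respect to which $\chi_y(F)$ is computed. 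Both points are standard and you flagged them yourself, so I would call this a complete strategy rather than a gap, but a full write-up must supply them.
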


We first note that if a torus action on a $2n$-dimensional compact almost complex manifold $M$ has isolated fixed points, then the coefficients $a_i(M)$ of the Hirzebruch $\chi_y(M)=\sum_{i=0}^n a_i(M) \cdot (-y)^i$ are non-negative.

\begin{lem} \label{l32}
Let a $k$-dimensional torus $T^k$ act on a $2n$-dimensional compact almost complex manifold $M$ with isolated fixed points. Let $\chi_y(M)=\sum_{i=0}^n a_i(M) \cdot (-y)^i$ be the Hirzebruch $\chi_y$-genus of $M$. Then the following hold:
\begin{enumerate}
\item $a_i(M) \geq 0$ for $0 \leq i \leq n$.
\item The total number of fixed points is $\sum_{i=0}^n a_i(M)$.
\item For $0 \leq i \leq n$, $a_i(M)=a_{n-i}(M)$.
\end{enumerate}
\end{lem}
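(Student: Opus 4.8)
The plan is to reduce everything to the circle case via the Kosniowski formula (Theorem~\ref{t31}), which is already available to us. The central observation is that the torus action can be restricted to a generic subcircle $S \subset T^k$ whose fixed point set coincides with $M^{T^k}$; such a subcircle exists because there are only finitely many orbit types, so we may pick a primitive $\xi \in \mathbb{Z}^k$ avoiding the finitely many hyperplanes $\ker w$ determined by the weights at the fixed points. For this $S$-action the fixed points are still isolated, and at each fixed point $p$ the $S$-weight corresponding to the $T^k$-weight $w_{p,i}$ is $\langle \xi, w_{p,i}\rangle$, which is a nonzero integer. Thus the number $d(-,p)$ of negative $S$-weights at $p$ is well-defined, and since $\chi_y(\mathrm{pt})=1$, the Kosniowski formula collapses to
\begin{equation*}
\chi_y(M)=\sum_{p \in M^{S}} (-y)^{d(-,p)}.
\end{equation*}

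First I would record this identity and read off part (1) and part (2) directly from it. Comparing with $\chi_y(M)=\sum_{i=0}^n a_i(M)\cdot(-y)^i$ and noting that each $d(-,p)$ lies in $\{0,1,\dots,n\}$, we see that $a_i(M)$ equals the number of fixed points with exactly $i$ negative $S$-weights; this is manifestly a non-negative integer, giving (1). Summing the coefficients gives $\sum_{i=0}^n a_i(M)=|M^{S}|=|M^{T^k}|$, the total number of fixed points, which is (2). I would emphasize that the integrality here is what makes the $a_i(M)$ genuine fixed-point counts, matching the remark in the introduction that each fixed point contributes $+1$ to a single $a_i(M)$.

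For part (3), the symmetry $a_i(M)=a_{n-i}(M)$, I would exploit the two equal expressions in the Kosniowski formula, namely that $\chi_y(M)$ can be computed using either negative or positive weights: since $d(+,p)=n-d(-,p)$ at each isolated fixed point, the second equality in Theorem~\ref{t31} reads $\chi_y(M)=\sum_{p}(-y)^{n-d(-,p)}$. Equating this with the first expression and matching coefficients of $(-y)^i$ shows that the number of fixed points with $i$ negative weights equals the number with $n-i$ negative weights, i.e.\ $a_i(M)=a_{n-i}(M)$. Alternatively, one can obtain the same symmetry by replacing $\xi$ with $-\xi$, which swaps positive and negative weights at every fixed point and hence replaces $d(-,p)$ by $d(+,p)=n-d(-,p)$, while leaving $\chi_y(M)$ unchanged.

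I do not anticipate a serious obstacle; the only point requiring care is the selection of the generic subcircle and the verification that passing from $T^k$ to $S$ preserves both the fixed point set and the isolatedness of the fixed points, together with the bookkeeping that each $T^k$-weight contributes a nonzero $S$-weight of the same sign pattern. Once that reduction is in place, all three assertions are immediate consequences of the Kosniowski formula. I would therefore structure the proof as: (i) produce $S$ and identify $M^S=M^{T^k}$ with the $S$-weights $\langle\xi,w_{p,i}\rangle$; (ii) apply Theorem~\ref{t31} to get the collapsed sum over points; (iii) extract (1) and (2) by coefficient comparison; and (iv) deduce (3) from the two equal forms of the formula (equivalently, from the $\xi\mapsto-\xi$ symmetry).
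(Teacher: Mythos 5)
Your proposal is correct and follows essentially the same route as the paper: choose a generic subcircle $S\subset T^k$ with $M^S=M^{T^k}$, apply the Kosniowski formula (Theorem \ref{t31}) so that $\chi_y(M)=\sum_{p}(-y)^{d(-,p)}$, read off (1) and (2) by coefficient comparison, and obtain (3) from the two equal forms of the formula via $d(+,p)=n-d(-,p)$. No gaps.
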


\begin{proof}
Because $M$ is compact, there is a finite number of orbit types. Therefore, there exists a subcircle $S^1$ of the torus $T^k$ that has the same fixed point set as the $T^k$-action on $M$, i.e., $M^{S^1}=M^{T^k}$. Since there are only isolated fixed points and the Hirzebruch $\chi_y$-genus of a point is $1$, by applying Theorem \ref{t31} to the $S^1$-action on $M$,
\begin{center}
$\displaystyle \chi_y(M)=\sum_{p \in M^{S^1}} (-y)^{d(-,p)} \cdot \chi_y(p)=\sum_{p \in M^{S^1}} (-y)^{d(-,p)} \cdot 1$.
\end{center}
Therefore, $a_i(M)$ is equal to the number of the $S^1$-fixed points ($T^k$-fixed points) that have exactly $i$ negative $S^1$-weights, and hence is non-negative. The second claim also follows.

Suppose that a fixed point $q$ has exactly $i$ negative weights for the $S^1$-action. Then in the first equation 
\begin{center}
$\displaystyle \chi_y(M)=\sum_{p \in M^{S^1}} (-y)^{d(-,p)} \cdot \chi_y(p)$
\end{center}
of Theorem \ref{t31}, the fixed point $q$ contributes the coefficient of $(-y)^i$ by $1$. Similarly, in the second equation
\begin{center}
$\displaystyle \chi_y(M)=\sum_{p \in M^{S^1}} (-y)^{d(+,p)} \cdot \chi_y(p)$
\end{center}
of Theorem \ref{t31}, the fixed point $q$ contributes the coefficient of $(-y)^{n-i}$ by $1$. This implies (3). \end{proof}

With the Kosniowski formula, we are ready to prove our main result of this section.

\begin{theo} \label{t32}
Let $M$ be a $2n$-dimensional almost complex torus manifold. Then $a_i(M) > 0$ for $0 \leq i \leq n$, where $\chi_y(M)=\sum_{i=0}^n a_i(M) \cdot (-y)^i$ is the Hirzebruch $\chi_y$-genus of $M$. In particular, the Todd genus of $M$ is positive.
\end{theo}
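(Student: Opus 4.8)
The plan is to induct on $n$ and use the Kosniowski formula (Theorem \ref{t31}) to pass from $M$ to a codimension-two almost complex torus submanifold of complex dimension $n-1$. First I would record the reductions coming from Lemma \ref{l32}: choosing a generic subcircle $S^1\subseteq T^n$ with $M^{S^1}=M^{T^n}$, the coefficient $a_i(M)$ equals the number of fixed points of index $i$, so $a_i(M)\ge 0$ and $a_i(M)=a_{n-i}(M)$. Hence it suffices to prove $a_j(M)>0$ for $0\le j\le n-1$, the top coefficient $a_n(M)=a_0(M)$ then following from the symmetry in Lemma \ref{l32}(3). The base case $n=1$ is immediate, since then $M=S^2$ and $\chi_y(M)=1-y$, giving $a_0=a_1=1$. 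I assume the theorem in all dimensions below $2n$.

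For the inductive step I would construct a good fixed submanifold. Fix a fixed point $p$ with weights $w_{p,1},\dots,w_{p,n}$, which form a basis of $\mathbb{Z}^n$ by Propositions \ref{p28} and \ref{p29}. Set $K:=\bigcap_{j\ne n}\ker w_{p,j}$. Since $w_{p,1},\dots,w_{p,n-1}$ are part of a basis, $K$ is a subcircle of $T^n$, and the component $N$ of $M^{K}$ containing $p$ is a compact connected almost complex submanifold of complex dimension $n-1$: its tangent space at $p$ is the sum of the weight spaces of $w_{p,1},\dots,w_{p,n-1}$, while $w_{p,n}$ is nontrivial on $K$. Because $T^n$ is connected it preserves every component of $M^K$, so $T^n$ acts on $N$; the subtorus acting trivially is exactly $K$, and the weights of the effective quotient action of $T^{n-1}=T^n/K$ at $p$ are the images of $w_{p,1},\dots,w_{p,n-1}$, which form a basis of the character lattice of $T^{n-1}$. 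Thus $N$ is a $2(n-1)$-dimensional almost complex torus manifold, and the induction hypothesis yields $a_i(N)>0$ for $0\le i\le n-1$.

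I would then apply Theorem \ref{t31} to the $K$-action on $M$. Writing $\chi_y(F)=\sum_i a_i(F)(-y)^i$ for each fixed component $F$ of $M^K$ and comparing coefficients gives
\[
a_j(M)=\sum_{F\subset M^{K}} a_{\,j-d(-,F)}(F).
\]
Every such $F$ is $T^n$-invariant, so restricting the generic circle $S^1$ to $F$ and invoking Lemma \ref{l32} shows $a_i(F)\ge 0$ for all $F$ (and $a_i(F)=0$ outside $0\le i\le\dim_{\mathbb{C}}F$); hence all summands are nonnegative. Now $N$ has complex codimension one, so its normal bundle carries a single $K$-weight and $d(-,N)\in\{0,1\}$. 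Replacing the generator of $K$ by its negative if necessary (equivalently, using the $d(+,\cdot)$ form of Theorem \ref{t31}), I may assume $d(-,N)=0$; then the term indexed by $N$ is exactly $a_j(N)$, so $a_j(M)\ge a_j(N)>0$ for $0\le j\le n-1$, and finally $a_n(M)=a_0(M)>0$ by Lemma \ref{l32}(3). This closes the induction.

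The step I expect to be the main obstacle is verifying that the codimension-two fixed component $N$ is genuinely an almost complex torus manifold, namely that the effective quotient torus has rank exactly $n-1$ and that the tangent weights at \emph{every} fixed point of $N$ (not only at $p$) span the character lattice; this is precisely where the almost complex hypothesis enters, through Proposition \ref{p28}. The second delicate point is the nonnegativity of the remaining contributions $a_{j-d(-,F)}(F)$, which is what allows a single ``divisor'' $N$ — already covering $n$ consecutive coefficients because codimension one forces $d(\pm,N)\in\{0,1\}$ — to establish positivity without the other components interfering.
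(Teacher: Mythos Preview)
Your argument is correct and follows essentially the same route as the paper's: induct on $n$, pick a codimension-two characteristic submanifold $N$ fixed by a subcircle, apply the Kosniowski formula, and use Lemma~\ref{l32} to ensure all other fixed components contribute nonnegatively while the inductive hypothesis on $N$ provides the positivity. The only cosmetic differences are that the paper handles $a_n(M)$ via the $d(+,\cdot)$ form of Theorem~\ref{t31} rather than invoking the symmetry $a_n=a_0$ from Lemma~\ref{l32}(3), and that the fact ``the weights at $p$ form a $\mathbb{Z}$-basis'' is a direct consequence of effectiveness (as the paper states) rather than of Propositions~\ref{p28}--\ref{p29}.
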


\begin{proof}
We prove by induction on the dimension $2n$ of the manifold $M$. The claim holds if $n=0$; if $M$ is a point, its Hirzebruch $\chi_y$-genus and Todd genus are both $1$.

Suppose that the theorem holds for all almost complex torus manifolds $N$ such that $\dim N<2n$. Let $F_0$ be a characteristic submanifold of $M$, that is, a connected component of real codimension 2 almost complex submanifold of $M$ fixed by some circle subgroup of $T^n$, which contains some $T^n$-fixed point $p \in M^{T^n}$.

Such a characteristic submanifold always exists and here is an example. Pick a fixed point $p \in M^{T^n}$. Let $w_{p,1}$, $\cdots$, $w_{p,n}$ be the $T^n$-weights at $p$. Since the $T^n$-action on $M$ is effective, $w_{p,1}$, $\cdots$, $w_{p,n}$ form a basis of $\mathbb{Z}^n$. Consider $U=\{x \in \mathfrak{t} \, | \, w_{p,2}(x)=\cdots=w_{p,n}(x)=0\} \subset \mathfrak{t}$, where $\mathfrak{t}$ denotes the Lie algebra of the torus $T^n$. Then $U$ is 1-dimensional. Let $\xi$ be a non-zero element in $U$ and let $S^1$ be a subcircle of $T^n$ generated by $\xi$. Then a component $F_0$ of the set $M^S$ of points in $M$ that are fixed by this $S^1$-action that contains $p$, is $(2n-2)$-dimensional.

Let $F$ be a component of $M^S$, not necessarily $F_0$. The $T^n$-action on $M$ restricts to act on $F$ and this $T^n$-action on $F$ has fixed points $F^{T^n}=M^{T^n} \cap F$, which are finite. Applying Lemma \ref{l32} to the $T^n$-action on $F$, it follows that $a_i(F) \ge 0$ for $0 \leq i \leq m$, where $\dim F=2m$ and $\chi_y(F)=\sum_{i=0}^m a_i(F) \cdot (-y)^i$ is the Hirzebruch $\chi_y$-genus of $F$.

Applying Theorem \ref{t31} to the $S^1$-action on $M$, we have
\begin{equation} \label{eq1}
\displaystyle \chi_y(M)=\sum_{F \subset M^{S^1}} (-y)^{d(-,F)} \cdot \chi_y(F),
\end{equation}
and
\begin{equation} \label{eq2}
\displaystyle \chi_y(M)=\sum_{F \subset M^{S^1}} (-y)^{d(+,F)} \cdot \chi_y(F).
\end{equation}

Let $k \in \{0,1,\cdots,n\}$. We consider the coefficient of $(-y)^k$-term of $\chi_y(M)$. In Equation \eqref{eq1}, a fixed component $F$ of $M^{S^1}$ contributes to $(-y)^k$-term of $\chi_y(M)$ by $a_i(F) \cdot (-y)^k$ if $i+d(-,F)=k$. Since $a_i(F) \geq 0$ for $0 \leq i \leq \frac{1}{2} \dim F$, in Equation \eqref{eq1}, $F$ contributes to the coefficient of $(-y)^k$-term of $\chi_y(M)$ by a non-negative integer. Similarly, in Equation \eqref{eq2}, $F$ contributes to the coefficient of $(-y)^k$-term of $\chi_y(M)$ by a non-negative integer $a_i(M)$ if $i+d(+,F)=k$.

The characteristic submanifold $F_0$ is a $(2n-2)$-dimensional compact connected almost complex submanifold, has an effective action of $T^{n-1}=T^n/S^1$ where $S^1$ is as above, and this $T^{n-1}$-action on $F_0$ has a fixed point $p$ since $p \in F_0^{T^{n-1}}=F_0 \cap M^{T^n}$. That is, $F_0$ is a $(2n-2)$-dimensional almost complex torus manifold. By inductive hypothesis, we have that $\chi^i(F_0) >0$ for $0 \leq i \leq n-1$. Since $\dim F_0=2n-2$, we have that either
\begin{enumerate}
\item $d(-,F_0)=0$ and $d(+,F)=1$, or
\item $d(-,F)=1$ and $d(+,F)=0$. 
\end{enumerate}

Suppose that $d(-,F_0)=0$ and $d(+,F_0)=1$. Since $d(-,F_0)=0$ and $a_i(F_0) >0$ for $0 \leq i \leq n-1$, for each $k \in \{0,1,\cdots,n-1\}$ the submanifold $F_0$ contributes the coefficient $a_k(M)$ of the $(-y)^k$-term in $\chi_y(M)$ by a positive integer in Equation \eqref{eq1}. Because any other fixed component contributes $a_k(M)$ by a non-negative integer, it follows that $a_k(M) >0$ for $k \in \{0,1,\cdots,n-1\}$. Similarly, since $d(+,F_0)=1$, $a_{n-1}(F_0)>0$ and $a_i(F) \geq 0$ for all fixed components $F \subset M^{S^1}$ and for all $i$, Equation \eqref{eq2} implies that $a_n(M) >0$. Therefore, $a_i(M)>0$ for $0 \leq i \leq n$. This proves the theorem when $d(-,F_0)=0$. The proof for the case that $d(-,F)=1$ and $d(+,F)=0$ is similar. 

The Todd genus $\mathrm{Todd}(M)$ of $M$ is the Hirzebruch $\chi_y$-genus $\chi_y(M)$ of $M$ evaluated at $y=0$, and we have $\mathrm{Todd}(M)=\chi_0(M)=\sum_{i=0}^n a_i(M) \cdot (-y)^i|_{y=0}=a_0(M)>0$. This proves the last claim of this theorem. \end{proof}

While Corollary \ref{c16} follows from Proposition \ref{p14}, it also follows from Lemma \ref{l32} and Theorem \ref{t32}. In the remaining of this section, we discuss that the conclusion of Theorem \ref{t15} is optimal in a sense that if we weaken any condition, the conclusion need not hold.

First, we cannot reduce the dimension of a torus acting on an almost complex manifold. As $G(2)/SU(3)$, the 6-sphere $S^6$ admits an almost complex structure, and admits an effective $T^2$-action with 2 fixed points. We can find a subcircle $S^1$ of $T^2$ that has the same fixed point set as the $T^2$-action on $S^6$, and the $S^1$-weights at the fixed points are $\{-a-b,a,b\}$ and $\{-a,-b,a+b\}$ for some positive integers $a$ and $b$. By Theorem \ref{t31}, the Hirzebruch $\chi_y$-genus of $S^6$ is $\chi_y(S^6)=-y+y^2$. In particular, the Todd genus of $S^6$ is zero; $\mathrm{Todd}(S^6)=\chi_0(S^6)=0$.

Second, we cannot weaken almost complex structure to unitary structure. To see this, consider the $2n$-dimensional sphere $S^{2n}=\{(z_1,\cdots,z_n,x) \in \mathbb{C}^n \times \mathbb{R} \, | \, x^2+\sum_{i=1}^n |z_i|^2=1\}$. Since $TS^{2n} \oplus \underline{\mathbb{R}}^2 = S^{2n} \times \mathbb{C}^{n+1}$, $S^{2n}$ admits a unitary structure. The Hirzebruch $\chi_y$-genus of $S^{2n}$ with this unitary structure vanishes; $\chi_y(TS^{2n} \oplus \underline{\mathbb{R}}^2)=0$. In particular, the Todd genus of $S^{2n}$ with this unitary structure also vanishes. Let $T^n$ act on $S^{2n}$ by
\begin{center}
$(g_1,\cdots,g_n) \cdot (z_1,\cdots,z_n,x)=(g_1 z_1, \cdots, g_n z_n,x)$
\end{center}
for all $(g_1,\cdots,g_n) \in T^n \subset \mathbb{C}^n$ and for all $(z_1,\cdots,z_n,x) \in S^{2n}$. This action has two fixed points $p=(0,\cdots,0,1)$ and $q=(0,\cdots,0,-1)$ that both have $T^n$-weights $i_1$, $i_2$, $\cdots$, $i_n$, where $i_j=(0,\cdots,0,1,0,\cdots,0)$ denote the standard unit vectors in $\mathbb{Z}^n$, $1 \leq j \leq n$. Let $S^1$ be a subcircle of $T^n$ generated by an element $(1,1,\cdots,1)$ in the Lie algebra $\mathfrak{t}$ of $T^n$. The $S^1$-weights at $p$ and $q$ are both $\{1,1,\cdots,1\}$, whereas $\chi_y(p)=+1$ and $\chi_y(q)=-1$. Thus $d(-,p)=d(-,q)=0$. By Theorem \ref{t31},
\begin{center}
$\chi_y(S^{2n})=(-y)^{d(-,p)}\cdot \chi_y(p)+(-y)^{d(-,q)}\cdot \chi_y(q)=+1-1=0$.
\end{center}
Therefore, Theorem \ref{t31} also confirms that $\chi_y(TS^{2n} \oplus \underline{\mathbb{R}}^2)=0$.

\section{Problem of Petrie type} \label{s4}

In \cite{Kob}, Kobayashi proved that for a torus action on a compact manifold, its Euler number is equal to the Euler number of its fixed point set.

\begin{theo} \cite{Kob} \label{t41}
Let a torus $T^k$ act on a compact manifold $M$. Then the Euler number $\chi(M)$ of $M$ is equal to the sum of Euler numbers of its fixed components. That is,
\begin{center}
$\displaystyle \chi(M)=\sum_{F \subset M^{T^k}} \chi(F)$.
\end{center}
\end{theo}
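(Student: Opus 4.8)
The plan is to reduce the assertion to a single circle action and then read it off from the Lefschetz fixed point theorem. First I would use that a compact $M$ has only finitely many isotropy subgroups, so a generic one-parameter subgroup generates a circle $S \subset T^k$ with $M^S = M^{T^k}$; if $g \in S$ generates a dense subgroup, then $g$ and $S$ (and hence $T^k$) have the same fixed point set, namely the disjoint union $\bigsqcup_F F$ of the fixed components. Averaging gives a $T^k$-invariant Riemannian metric, so $g$ acts as an isometry; each $F$ is then a smooth (totally geodesic) submanifold, and $g$ acts orthogonally on the normal bundle $NF$ without fixing any nonzero normal vector, i.e.\ $I - dg$ is invertible on $NF$.

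Since $T^k$ is connected, $g$ is homotopic to the identity, so $g^*$ is the identity on $H^*(M;\mathbb{Q})$ and the Lefschetz number equals the Euler number:
\begin{center}
$\displaystyle L(g) = \sum_i (-1)^i \operatorname{tr}\bigl(g^* \mid H^i(M)\bigr) = \sum_i (-1)^i \dim H^i(M) = \chi(M)$.
\end{center}
On the other hand, because the fixed point set of $g$ is clean (each component a submanifold with $I - dg$ invertible normally), the Lefschetz fixed point theorem for fixed submanifolds gives $L(g) = \sum_F \operatorname{sign}\bigl(\det(I - dg|_{NF})\bigr)\cdot \chi(F)$.

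The decisive step is the sign computation. On each invariant $2$-plane of the orthogonal map $dg|_{NF}$, with rotation angle $\theta \ne 0$, one finds $\det(I - dg|_{NF}) = 2(1-\cos\theta) > 0$, so every normal factor contributes $+1$ and the local contribution of $F$ is exactly $\chi(F)$. Combining the two expressions for $L(g)$ yields $\chi(M) = \sum_F \chi(F)$, as desired.

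I expect the main obstacle to be justifying the clean-fixed-point form of the Lefschetz formula and the attendant positivity of the normal determinant, rather than the (routine) reduction to a circle. An equivalent and perhaps more elementary route avoids signs almost entirely: the circle action generates a Killing vector field $X$ whose zero set is $M^S = \bigsqcup_F F$, with normal linearization an invertible skew-symmetric endomorphism; the Morse--Bott form of the Poincaré--Hopf theorem then gives $\chi(M) = \sum_F \operatorname{ind}(F)$, and since the normal linearization has positive determinant its Poincaré--Hopf index is $+1$, so $\operatorname{ind}(F) = \chi(F)$ and again $\chi(M) = \sum_F \chi(F)$. Either way the crux is that a fixed-point-free rotation in the normal directions contributes trivially to Euler characteristics.
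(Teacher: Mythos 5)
The paper does not prove this statement: Theorem \ref{t41} is imported verbatim from Kobayashi's paper \cite{Kob}, so there is no internal argument to compare yours against. That said, your proof is correct and is essentially the classical argument (and close in spirit to Kobayashi's original one, which also runs through isometries in the identity component and the Lefschetz number). The reduction to a topological generator $g$ of a circle $S$ with $M^S=M^{T^k}$ is justified by the finiteness of orbit types, the identity $L(g)=\chi(M)$ follows since $g$ lies in a connected group, and the clean Lefschetz formula applies because $I-dg$ is invertible on $NF$; the sign computation is right, since $dg|_{N_pF}$ is an orthogonal circle-representation generator with no nonzero fixed vector, hence a direct sum of genuine rotations, each contributing $2(1-\cos\theta)>0$. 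The only step you gloss over is why a fixed normal vector of $dg$ would contradict $F$ being a full component of $M^S$: one needs the standard fact that, via the exponential map of the invariant metric, $M^S$ near $p$ is identified with the fixed subspace of the isotropy representation on $T_pM$. Your Poincar\'e--Hopf variant for the Killing field is equally valid. For completeness, there is an even softer route that avoids both Lefschetz theory and sign computations: on $M\setminus M^S$ the circle $S$ acts with finite isotropy, so $\chi(M\setminus M^S)=0$, and additivity of the Euler characteristic gives $\chi(M)=\chi(M^S)$ directly.
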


If an almost complex torus manifold has the same Euler number as $\mathbb{CP}^n$, then it has the same Hirzebruch $\chi_y$-genus as $\mathbb{CP}^n$.

\begin{theo} \label{t42}
Let $M$ be a $2n$-dimensional almost complex torus manifold with Euler number $n+1$. Then the Hirzebruch $\chi_y$-genus of $M$ is $\chi_y(M)=\sum_{i=0}^n (-y)^i$, the Todd genus of $M$ is $1$, and the signature $M$ is $\sum_{i=0}^n (-1)^i$.
\end{theo}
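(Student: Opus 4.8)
The plan is to reduce the entire statement to an elementary counting argument, resting on the two substantial results already in hand: the strict positivity of all the coefficients $a_i(M)$ (Theorem \ref{t32}) and the interpretation of $\sum_{i=0}^n a_i(M)$ as the number of fixed points (part (2) of Lemma \ref{l32}). All of the genuinely hard analytic content has been deferred to Theorem \ref{t32}, so here there is essentially nothing to grind through.

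First I would identify the Euler number with the number of fixed points. Since $M$ is an almost complex torus manifold, its $T^n$-action has only isolated fixed points, so every fixed component is a single point, contributing $1$ to the Euler number. By Theorem \ref{t41}, the Euler number $\chi(M)$ equals the sum of the Euler numbers of the fixed components, hence equals the number of fixed points. As $\chi(M)=n+1$ by hypothesis, $M$ has exactly $n+1$ fixed points.

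Next I would combine this count with the coefficient data. Writing $\chi_y(M)=\sum_{i=0}^n a_i(M)\cdot(-y)^i$, part (2) of Lemma \ref{l32} gives that the total number of fixed points equals $\sum_{i=0}^n a_i(M)$, so $\sum_{i=0}^n a_i(M)=n+1$. On the other hand, Theorem \ref{t32} yields $a_i(M)>0$ for all $i$, and since each $a_i(M)$ is an integer this forces $a_i(M)\ge 1$. We therefore have $n+1$ integers, each at least $1$, summing to exactly $n+1$; by the pigeonhole principle each must equal $1$, i.e. $a_i(M)=1$ for all $0\le i\le n$. This immediately gives $\chi_y(M)=\sum_{i=0}^n(-y)^i$.

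Finally I would read off the three specializations from the convention recalled in the introduction: the Todd genus is the value at $y=0$, namely $\chi_0(M)=a_0(M)=1$, and the signature is the value at $y=1$, namely $\chi_1(M)=\sum_{i=0}^n a_i(M)\cdot(-1)^i=\sum_{i=0}^n(-1)^i$. There is no real obstacle at this stage; the only observation required is the pigeonhole fact that $n+1$ positive integers can sum to $n+1$ only when each of them equals $1$, and this is precisely where the positivity supplied by Theorem \ref{t32} (rather than the mere non-negativity of Lemma \ref{l32}) becomes indispensable.
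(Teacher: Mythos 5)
Your argument is correct and is essentially identical to the paper's own proof: both deduce from Theorem \ref{t41} that $M$ has exactly $n+1$ fixed points, combine Lemma \ref{l32}(2) with the strict positivity from Theorem \ref{t32} to force $a_i(M)=1$ for all $i$, and then specialize at $y=0$ and $y=1$. No gaps.
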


\begin{proof}
The manifold $M$ has finitely many fixed points. The Euler number of a point is $1$. Thus by Theorem \ref{t41} $M$ has precisely $n+1$ fixed points. By Theorem \ref{t15}, $a_i(M)>0$ for $0 \leq i \leq n$, where $\chi_y(M)=\sum_{i=0}^n a_i(M) \cdot (-y)^i$ is the Hirzebruch $\chi_y$-genus of $M$. By Lemma \ref{l32}, $\sum_{i=0}^n a_i(M)$ is equal to the total number of fixed points, which is $n+1$. It follows that $a_i(M)=1$ for $0 \leq i \leq n$, that is, $\chi_y(M)=\sum_{i=0}^n (-y)^i$. The other claims follow. \end{proof}

Let a $k$-dimensional torus $T^k$ act on a compact oriented manifold $M$. The projection map $\pi : M \times_{T^k} ET^k \rightarrow BT^k$ induces a push-forward map
\begin{center}
$\displaystyle \pi_{*} : H_{T^k}^{i}(M;\mathbb{Z}) \rightarrow H^{i-\dim M}(BT^k;\mathbb{Z})$
\end{center}
for all $i \in \mathbb{Z}$, where $BT^k$ is the classifying space of $T^k$. The projection map is given by integration over the fiber $M$, and is denoted by $\int_M$.

\begin{theo} \emph{[The Atiyah-Bott-Berline-Vergne localization theorem]} \cite{AB} \label{t43}
Let a torus $T^k$ act on a compact oriented manifold $M$. Let $ \alpha \in H_{T^k}^{\ast}(M;\mathbb{Q})$. As an element of $\mathbb{Q}(t)$,
\begin{center}
$\displaystyle \int_{M} \alpha = \sum_{F \subset M^{T^k}} \int_{F} \frac{\alpha|_{F}}{e_{T^k}(NF)}$,
\end{center}
where the sum is taken over all fixed components, and $e_{T^k}(NF)$ is the equivariant Euler class of the normal bundle to $F$ in $M$.
\end{theo}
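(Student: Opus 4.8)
The plan is to deduce the formula from the Borel localization theorem in equivariant cohomology together with the functoriality of the equivariant Gysin (push-forward) map. Throughout, write $i_F : F \hookrightarrow M$ for the inclusion of a fixed component, $i : M^{T^k} \hookrightarrow M$ for the inclusion of the entire fixed set, and $\pi^M : M \to \mathrm{pt}$, $\pi^F : F \to \mathrm{pt}$ for the constant maps, so that $\int_M = (\pi^M)_*$ and $\int_F = (\pi^F)_*$ on equivariant cohomology. Let $R = H_{T^k}^*(\mathrm{pt};\mathbb{Q}) = \mathbb{Q}[t_1,\dots,t_k]$ and let $S \subset R$ be the multiplicative set generated by the nonzero homogeneous elements; localizing at $S$ produces the field $\mathbb{Q}(t)$ in which the asserted identity is to hold.

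First I would establish the localization theorem: the restriction $i^* : S^{-1}H_{T^k}^*(M;\mathbb{Q}) \to S^{-1}H_{T^k}^*(M^{T^k};\mathbb{Q})$ is an isomorphism. Since localization is exact, the long exact sequence of the pair $(M, M^{T^k})$ reduces this to showing that the relative groups $S^{-1}H_{T^k}^*(M, M^{T^k};\mathbb{Q})$ vanish, i.e.\ that $H_{T^k}^*(M, M^{T^k};\mathbb{Q})$ is an $S$-torsion $R$-module. Covering $M \setminus M^{T^k}$ equivariantly by tubes around orbits, each orbit has proper stabilizer, and the equivariant cohomology of an orbit $T^k/H$ with $H \neq T^k$ is annihilated by a nonzero element of $R$; a Mayer--Vietoris induction over a finite equivariant good cover then shows the relative cohomology is $S$-torsion.

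Next I would record the two algebraic facts that make the formula explicit. Each normal bundle $NF$ splits $T^k$-equivariantly into line bundles on which $T^k$ acts with nonzero weights $w_1,\dots,w_r$; since $T^k$ acts trivially on $F$, the equivariant Euler class equals $e_{T^k}(NF) = \prod_j (w_j + c_1(L_j))$, which is the product $\prod_j w_j \in R$ (nonzero) plus terms of positive degree in $H^*(F)$, and the latter are nilpotent because $F$ is compact; hence $e_{T^k}(NF)$ is invertible in $S^{-1}H_{T^k}^*(F;\mathbb{Q})$. Second, the self-intersection formula gives $i_F^* (i_F)_*(\beta) = e_{T^k}(NF)\cdot \beta$ for $\beta \in H_{T^k}^*(F;\mathbb{Q})$, while $i_{F'}^*(i_F)_* = 0$ for distinct disjoint components $F \neq F'$, since $(i_F)_*$ is supported in a tubular neighborhood of $F$. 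Combining these, applying $i^*$ to $\sum_F (i_F)_*\!\bigl(\alpha|_F / e_{T^k}(NF)\bigr)$ recovers $i^*\alpha$ componentwise, and injectivity of the localized $i^*$ yields the localized identity $\alpha = \sum_F (i_F)_*\!\bigl(\alpha|_F / e_{T^k}(NF)\bigr)$ in $S^{-1}H_{T^k}^*(M;\mathbb{Q})$.

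Finally I would apply $\int_M = (\pi^M)_*$ to this identity. Functoriality of the push-forward along $\pi^M \circ i_F = \pi^F$ gives $(\pi^M)_* (i_F)_* = (\pi^F)_*$, so that $\int_M (i_F)_*(\gamma) = \int_F \gamma$ for every $\gamma$; taking $\gamma = \alpha|_F / e_{T^k}(NF)$ and summing over $F$ produces exactly $\int_M \alpha = \sum_F \int_F \alpha|_F / e_{T^k}(NF)$ as an identity in $\mathbb{Q}(t)$. The main obstacle is the localization theorem of the second step --- that $i^*$ becomes an isomorphism after inverting the weights --- since everything downstream is formal manipulation of Gysin maps; its content is the vanishing of the localized relative cohomology, which rests on the fact that the equivariant cohomology of any orbit with proper stabilizer is torsion over $R$.
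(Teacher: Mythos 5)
The paper does not prove this statement: it is quoted as a classical result with a citation to Atiyah--Bott \cite{AB}, so there is no in-paper argument to compare yours against. Your proposal is the standard proof from that reference --- Borel localization (showing $H_{T^k}^*(M,M^{T^k};\mathbb{Q})$ is $S$-torsion by a Mayer--Vietoris induction over tubes around orbits with proper stabilizers), invertibility of $e_{T^k}(NF)$ after inverting the weights, the self-intersection formula $i_F^*(i_F)_* = e_{T^k}(NF)\cdot(-)$, and functoriality of the Gysin maps --- and it is correct in outline. Two points worth tightening: the normal bundle $NF$ decomposes $T^k$-equivariantly into isotypic \emph{complex subbundles} of possibly higher rank rather than into honest line bundles, so your Euler-class computation should be justified by the splitting principle (the conclusion, that $e_{T^k}(NF)$ equals a nonzero element of $R$ plus nilpotent higher-degree terms, is unaffected); and the finiteness of the equivariant cover underlying the Mayer--Vietoris induction uses compactness of $M$ together with finiteness of orbit types, which you should invoke explicitly. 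With those caveats the argument is essentially the one the cited source gives.
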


In \cite{Jan2}, a circle action on an almost complex manifold with few fixed points is classified; also see \cite{Jan1} for symplectic actions.

\begin{theorem} \label{t44}\cite{Jan2} Let the circle act on a compact, connected almost complex manifold $M$. 
\begin{enumerate}
\item If there is exactly one fixed point, then $M$ is a point. 
\item If there are exactly two fixed points, then either $\dim M=2$ or $\dim M=6$. If $\dim M=2$, $M$ is the 2-sphere and the weights at the fixed points are $\{a\}$ and $\{-a\}$ for some some positive integer $a$. If $\dim M=6$, then the weights at the fixed points are $\{-a-b,a,b\}$ and $\{-a,-b,a+b\}$ for some positive integers $a$ and $b$. 
\item If there are exactly three fixed points, then $\dim M=4$. Moreover, the weights at the fixed points are $\{a+b,a\},\{-a,b\}$, and $\{-b,-a-b\}$ for some positive integers $a$ and $b$.
\end{enumerate}
\end{theorem}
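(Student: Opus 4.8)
The plan is to translate the geometry into combinatorics through the Kosniowski formula and then to pin down both the dimension and the weights with the Atiyah--Bott--Berline--Vergne localization theorem (Theorem \ref{t43}). Fix once and for all a subcircle whose fixed set is exactly $M^{S^1}$, so that all fixed points are isolated, and write $N_i$ for the number of fixed points of index $i$. Exactly as in the proof of Lemma \ref{l32}, Theorem \ref{t31} gives $\chi_y(M)=\sum_i N_i(-y)^i$, so $N_i=a_i(M)\ge 0$, $N_i=N_{n-i}$, and $\sum_i N_i$ equals the number of fixed points. For one, two, or three fixed points this symmetry-and-nonnegativity bookkeeping, together with a parity count (an odd number of fixed points forces $n$ even, since only the self-paired index $n/2$ can contribute an odd amount), already restricts the admissible multiset of indices to $\{n/2\}$; to $\{i_0,n-i_0\}$ or $\{n/2,n/2\}$; and to $\{i_0,n/2,n-i_0\}$ or $\{n/2,n/2,n/2\}$, respectively.

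The one fixed point case I would dispose of immediately by localization. Applying Theorem \ref{t43} with $\alpha=1$ gives $\int_M 1=\sum_p \frac{1}{e_{S^1}(N_pM)}=t^{-n}\sum_p \frac{1}{\prod_i w_{p,i}}$, where $t$ generates $H^2(BS^1)$. Since $\int_M 1\in H^{-2n}(BS^1)=0$ whenever $n\ge 1$, this yields $\sum_p \frac{1}{\prod_i w_{p,i}}=0$. With a single fixed point the left side is one nonzero rational number, a contradiction; hence $n=0$ and $M$ is a point.

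For two and three fixed points I would feed the equivariant powers of the first Chern class into the same machine. Writing $A_p=\sum_i w_{p,i}$ and $P_p=\prod_i w_{p,i}$, the class $c_1^{S^1}(TM)$ restricts at $p$ to $A_p\,t$ while $e_{S^1}(N_pM)=P_p\,t^{\,n}$, so $\int_M \big(c_1^{S^1}\big)^k=t^{\,k-n}\sum_p A_p^{\,k}/P_p$. For $0\le k\le n-1$ this lies in negative degree and therefore vanishes, giving the power-sum identities $\sum_p A_p^{\,k}/P_p=0$. Combined with Corollary \ref{c23} ($\sum_p A_p=0$), the $k=0$ and $k=1$ relations force $P_q=-P_p$ and, once $n\ge 2$, also $A_p=A_q=0$ in the two point case (and the analogous system with three summands for three points); geometrically, the two tangential $S^1$-representations are forced to be complementary. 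Feeding these identities into Lemma \ref{l27} (weights agree modulo $w$ along a common component of $M^{\ker w}$) and the no-self-loop multigraph of Lemma \ref{l21}, which joins the fixed points by explicit edges with matching weights, I would then solve for the weights and confirm that the only surviving index distribution is the stated one.

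The main obstacle is the dimension bound: ruling out large $n$ so that only $n\in\{1,3\}$ survives with two fixed points and $n=2$ with three. The localization identities and the mod-$w$ congruences alone do not terminate, since they are scale-invariant in the weights. The essential extra input is the integrality of \emph{every} Chern number -- equivalently, the rigidity of the $\chi_y$-genus under the $S^1$-action -- applied to the extremely rigid complementary weight systems $\{w_{p,i}\}$ and $\{w_{q,i}\}$; this arithmetic self-consistency is what I expect to be the crux, and it is the step that excludes all dimensions other than those listed (an idea parallel to the inductive dimension control via distinguished fixed submanifolds used in Theorem \ref{t32}, but here driven by integrality rather than by characteristic submanifolds). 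Once the dimension is fixed, substituting back into the power-sum identities and using the positivity of the parameters produces the explicit weights $\{a\},\{-a\}$ in dimension $2$, $\{-a-b,a,b\},\{-a,-b,a+b\}$ in dimension $6$, and $\{a+b,a\},\{-a,b\},\{-b,-a-b\}$ in dimension $4$.
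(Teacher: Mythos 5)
This statement is quoted from \cite{Jan2}; the paper you were given contains no proof of it, so there is nothing internal to compare your argument against. Judged on its own terms, your proposal handles the one-fixed-point case correctly (the ABBV identity $\sum_p 1/\prod_i w_{p,i}=0$ is indeed incompatible with a single fixed point unless $n=0$), and the Kosniowski bookkeeping $N_i=N_{n-i}\ge 0$ with the parity observation is sound. But the proposal has a genuine gap exactly where you flag it: the dimension restriction. The identities you actually derive --- $\sum_p 1/P_p=0$, $\sum_p A_p^k/P_p=0$ for $k\le n-1$, and $\sum_p A_p=0$ --- collapse, in the two-fixed-point case, to $P_q=-P_p$ and $A_p=A_q=0$, after which every higher power sum $\sum_p A_p^k/P_p$ vanishes automatically and yields no further information. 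These conditions do not exclude large $n$: for instance $n=4$ with weight multisets $\{1,1,1,-3\}$ at $p$ and $\{3,-1,-1,-1\}$ at $q$ satisfies all of them (including the global weight-matching condition of Proposition \ref{p22}(1)), yet the theorem asserts no such manifold exists. So the step you describe as ``the crux'' --- invoking ``integrality of every Chern number, equivalently rigidity of the $\chi_y$-genus'' --- is not an argument but a placeholder, and it is precisely the hard content of the theorem: ruling out all dimensions other than $2$ and $6$ for two fixed points, and other than $4$ for three.

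A second, smaller gap is the derivation of the explicit weights once the dimension is known. In dimension $6$ with two fixed points, the relations $A_p=0$, $P_q=-P_p$ and the mod-$w$ congruences of Lemma \ref{l27} do not by themselves force the weights into the form $\{-a-b,a,b\}$ and $\{-a,-b,a+b\}$ with $a,b>0$; one needs the finer structure of the isotropy submanifolds (which weights pair with which along components of $M^{\mathbb{Z}/(w)}$, as in Lemma \ref{l21}) together with further localization identities in higher degree, and this is not carried out. The three-fixed-point weight determination in dimension $4$ is more credible, since it mirrors the computation done in Theorem \ref{t45}, but the reduction to $\dim M=4$ in the first place again rests on the unproven dimension bound. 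As written, the proposal is a plausible strategy outline rather than a proof.
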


We first prove Theorem \ref{t19} in dimension up to 4 separately.

\begin{theo} \label{t45}
Let $M$ be a $2n$-dimensional almost complex torus manifold with Euler number $n+1$. Suppose that $n \leq 2$. Then a graph describing $M$ and weights at the fixed points are the same as those of $\mathbb{CP}^n$, where $\mathbb{CP}^n$ is equipped with a linear $T^n$-action of Example \ref{e111}.
\end{theo}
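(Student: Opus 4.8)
The plan is to handle the three cases $n=0,1,2$ directly, since the substance lies in $n=2$. For $n=0$ the manifold is a point and there is nothing to prove. For $n=1$, Theorem \ref{t42} (via Theorem \ref{t41}) forces exactly two fixed points, so by Theorem \ref{t44}(2) the manifold $M$ is a $2$-sphere with weights $\{a\}$ and $\{-a\}$ at the two fixed points; effectiveness of the $S^1$-action forces $a$ to generate $\mathbb{Z}$, hence $a=1$ after choosing the basis, and the describing graph is the single labeled edge of Example \ref{e111} for $n=1$.

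The heart of the matter is $n=2$. First I would note that Euler number $3$ gives exactly three fixed points $p_0,p_1,p_2$ by Theorem \ref{t41}. By Proposition \ref{p28} there is a graph $\Gamma$ describing $M$ with no self-loops and no multiple edges. Since each fixed point carries exactly two weights, Definition \ref{d12} makes each vertex of $\Gamma$ have degree $2$; the only simple graph on three vertices in which every vertex has degree $2$ is the triangle, so $\Gamma$ is forced to be the $3$-cycle on $p_0,p_1,p_2$. Using Remark \ref{r25} to reorient edges freely, I would orient the three edges as $p_0\to p_1$, $p_0\to p_2$, $p_1\to p_2$ and write $\alpha=w(p_0p_1)$, $\beta=w(p_0p_2)$, $\gamma=w(p_1p_2)$. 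By Definition \ref{d12} the weights are then $\{\alpha,\beta\}$ at $p_0$, $\{-\alpha,\gamma\}$ at $p_1$, and $\{-\beta,-\gamma\}$ at $p_2$, and effectiveness makes $\{\alpha,\beta\}$ a basis of $\mathbb{Z}^2$.

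The key step is to pin down $\gamma$. Each edge label is the weight of a component of an isotropy submanifold containing its two endpoints, so Lemma \ref{l27} applies to each edge. Reading the three congruences and matching the non-zero residues (using that $\{\alpha,\beta\}$ is a basis, so neither is a multiple of the other) yields $\gamma\equiv\beta\pmod{\alpha}$ from the edge $p_0p_1$ and $\gamma\equiv-\alpha\pmod{\beta}$ from the edge $p_0p_2$; that is, $\gamma=\beta+c\alpha=-\alpha+d\beta$ for integers $c,d$. Subtracting these and invoking the linear independence of $\alpha,\beta$ forces $c=-1$ and $d=1$, hence $\gamma=\beta-\alpha$. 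Setting $a_1=\alpha$ and $a_2=\beta$ then identifies the weights at $p_0,p_1,p_2$ with $\{a_1,a_2\}$, $\{-a_1,a_2-a_1\}$, $\{-a_2,a_1-a_2\}$, and the labeled triangle with the graph of Example \ref{e111}, completing the proof.

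I expect the main obstacle to be precisely this last step: upgrading the weak congruences supplied by Lemma \ref{l27} to the exact identity $\gamma=\beta-\alpha$. Each congruence alone leaves a one-parameter family of lattice possibilities; it is the simultaneous use of two distinct edges together with the basis property of the weights at $p_0$ that collapses them to a unique solution. One could alternatively route $n=2$ through Theorem \ref{t44}(3), reconstructing the $T^2$-weights from a generic subcircle whose fixed-point weights are $\{a+b,a\},\{-a,b\},\{-b,-a-b\}$, but the graph-plus-congruence argument seems more direct and keeps the bookkeeping in a form that should generalize to higher $n$.
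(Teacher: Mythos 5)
Your proposal is correct, and for the only substantive case ($n=2$) it takes a genuinely different route from the paper. The paper, after reducing to the same labeled triangle with weights $\{a,b\}$, $\{-a,c\}$, $\{-b,-c\}$, pins down $c$ by pushing forward the class $1$ and applying the Atiyah--Bott--Berline--Vergne localization theorem: $\int_M 1=0$ for dimension reasons, giving $\frac{1}{ab}-\frac{1}{ac}+\frac{1}{bc}=0$ and hence $c=b-a$. You instead apply Lemma \ref{l27} to two of the edges: the edge $p_0p_1$ forces $\gamma\equiv\beta\pmod{\alpha}$ (the matching of residues is indeed forced because $\beta\not\equiv 0\pmod{\alpha}$, as $\{\alpha,\beta\}$ is a basis of $\mathbb{Z}^2$), the edge $p_0p_2$ forces $\gamma\equiv-\alpha\pmod{\beta}$, and linear independence of $\alpha,\beta$ collapses the two one-parameter families to $\gamma=\beta-\alpha$. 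Both arguments are complete; yours is more elementary in that it avoids equivariant cohomology entirely and only uses the congruence lemma already needed for Proposition \ref{p28}, while the paper's localization computation is the one that recurs verbatim in the proof of Theorem \ref{t46} (applied there to a $4$-dimensional isotropy component $F_0$), so the paper's choice is the one that scales directly to general $n$. Your closing remark that the congruence method "should generalize to higher $n$" is the only place I would be cautious: for $n>2$ the hard part of Theorem \ref{t46} is showing that the relevant component of $M^{T}$ contains no fixed points beyond $p_0,p_i,p_j$, which the paper handles with the Hirzebruch-genus positivity result, not with congruences.
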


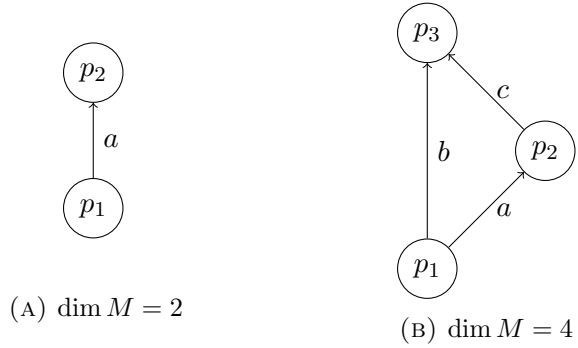
\begin{figure} 
\centering
\begin{subfigure}[b][4.3cm][s]{.4\textwidth}
\centering
\vfill
\begin{tikzpicture}[state/.style ={circle, draw}]
\node[state] (a) {$p_{1}$};
\node[state] (b) [above=of a] {$p_2$};
\path (a) [->] edge node[right] {$a$} (b);
\end{tikzpicture}
\vfill
\caption{$\dim M=2$}\label{fig3-1}
\end{subfigure}
\begin{subfigure}[b][4.3cm][s]{.4\textwidth}
\centering
\vfill
\begin{tikzpicture}[state/.style ={circle, draw}]
\node[state] (a) {$p_{1}$};
\node[state] (b) [above right=of a] {$p_2$};
\node[state] (c) [above left=of b] {$p_{3}$};
\path (a) [->] edge node[right] {$a$} (b);
\path (b) [->] edge node [right] {$c$} (c);
\path (a) [->] edge node [right] {$b$} (c);
\end{tikzpicture}
\vfill
\caption{$\dim M=4$}\label{fig3-2}
\end{subfigure}
\caption{Graphs describing $M$ in Theorem \ref{t45}}\label{fig3}
\end{figure}

\begin{proof}
If $n=0$, $M$ and $\mathbb{CP}^0$ are both points and the theorem holds. If $n=1$, the theorem follows from (2) of Theorem \ref{t44}; Figure \ref{fig3-1} describes $M$. Therefore, from now on, let $n=2$. By Proposition \ref{p28} (also see Remark \ref{r25}), Figure \ref{fig3-2} describes $M$, for some non-zero elements $a$, $b$, and $c$ in $\mathbb{Z}^2$. Then $p_1$ has weights $a$ and $b$, $p_2$ has weights $-a$ and $c$, and $p_3$ has weights $-b$ and $-c$.

We push-forward the equivariant cohomology class $1$ in the equivariant cohomology; for a dimensional reason that $\int_M$ is a map from $H_{T^2}^i(M;\mathbb{Z})$ to $H^{i - 4}(BT^2;\mathbb{Z})$, the image of $1$ under the map $\int_M$ is zero;
\begin{center}
$\displaystyle \int_M 1=0$.
\end{center}
Applying Theorem \ref{t43} to the $T^2$-action on $M$, taking $\alpha=1$,
\begin{center}
$\displaystyle \int_M 1=\sum_{p \in M^{T^k}} \frac{1}{e_{T^k}(N_pM)}=\sum_{p \in M^{T^k}} \frac{1}{e_{T^k}(T_pM)}=\sum_{p \in M^{T^2}} \frac{1}{\prod_{i=1}^2 w_{p,i}}=\frac{1}{ab}+\frac{1}{(-a)c}+\frac{1}{(-b)(-c)}$.
\end{center}
Therefore, $c=b-a$. Letting $c=b-a$, Figure \ref{fig3-2} also describes a linear $T^2$-action on $\mathbb{CP}^2$ given by
\begin{center}
$g \cdot [z_0:z_1:z_2]=[z_0:g^a z_1:g^b z_2]$,
\end{center}
see Example \ref{e111} for more details.
\end{proof}

With the above, we are ready to prove our main result of this section.

\begin{theo} \label{t46}
Let $M$ be a $2n$-dimensional almost complex torus manifold with Euler number $n+1$. Then the following invariants of $M$ and $\mathbb{CP}^n$ are equal. Here, $\mathbb{CP}^n$ is equipped with a linear $T^n$-action of Example \ref{e111}.
\begin{enumerate}
\item A (multi)graph describing it.
\item The weights at the fixed points.
\item All the Chern numbers.
\item Equivariant cobordism class.
\item The Hirzebruch $\chi_y$-genus.
\item The Todd genus.
\item The signature.
\end{enumerate}
If furthermore the action on $M$ is equivariantly formal, the following invariants of $M$ and $\mathbb{CP}^n$ are also equal.
\begin{enumerate}
\item[(8)] The rational equivariant cohomology.
\item[(9)] The Chern classes.
\end{enumerate}
\end{theo}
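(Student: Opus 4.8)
The plan is to reduce the entire theorem to statement (1), the equality of describing graphs, since every remaining item follows from (1) by the implications recorded in Remark \ref{r111}: (1)$\Rightarrow$(2) by Definition \ref{d12}, (2)$\Rightarrow$(3) by the localization formula (Theorem \ref{t43}), (3)$\Rightarrow$(4) and (3)$\Rightarrow$(5), then (5)$\Rightarrow$(6),(7); and, under equivariant formality, (1)$\Rightarrow$(8) because our graph is a GKM graph, while (1)$\Rightarrow$(9) by Proposition 3.4 of \cite{GKZ}. (Item (5), and hence (6),(7), also follows directly from Theorem \ref{t42}.) So I would only need to establish (1).

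For (1) I would argue by induction on $n$, the base cases $n\le 2$ being Theorem \ref{t45}. By Theorem \ref{t42} the manifold $M$ has exactly $n+1$ fixed points $p_0,\dots,p_n$, and by Proposition \ref{p28} a describing graph $\Gamma$ has no self-loops and no multiple edges; since each vertex carries $n$ weights, $\Gamma$ is the complete graph on the $n+1$ vertices. Fix $p_0$ and let $a_1,\dots,a_n$ be its weights, where $a_i$ labels the edge $p_0p_i$; these form a basis of $\mathbb{Z}^n$ because the action is effective. Set $a_0=0$. The goal is to show that the edge $p_ip_j$ carries label $a_j-a_i$ for all $i<j$, for then $\Gamma$ is exactly the graph of the linear action in Example \ref{e111}, and the equality of weights (2) follows from Remark \ref{r25}.

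The engine is a family of characteristic submanifolds. For each $k\in\{1,\dots,n\}$ I would choose a primitive $\xi^{(k)}$ in the Lie algebra of $T^n$ that is orthogonal to every weight $a_j$ with $j\ne k$ but not to $a_k$, and let $F^{(k)}$ be the component through $p_0$ of the fixed set of the circle it generates. Exactly as in the construction of the characteristic submanifold in the proof of Theorem \ref{t32}, $F^{(k)}$ is a $(2n-2)$-dimensional almost complex torus manifold; it contains $p_0$, and through the edges $p_0p_j$ with $j\ne k$ (whose weights lie in $(\xi^{(k)})^\perp$) it contains every $p_j$ with $j\ne k$, so $F^{(k)}\supseteq\{p_j:j\ne k\}$. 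The crucial claim is that $F^{(k)}$ contains \emph{exactly} these $n$ fixed points. Granting this, $F^{(k)}$ has Euler number $n=(n-1)+1$, so the inductive hypothesis applies and its describing graph is that of $\mathbb{CP}^{n-1}$. Choosing $p_0$ as base vertex, the weights of $F^{(k)}$ at $p_0$ are $\{a_j:j\ne k\}$, which forces the distinguished basis of the inductive model to be $\{a_j\}_{j\ne k}$ (with $a_0=0$), whence the edge $p_ip_j$ of $F^{(k)}$ carries label $a_j-a_i$ for all $i,j\ne k$. Since the $2$-sphere realizing an edge of $F^{(k)}$ is the same $2$-sphere in $M$ (and $\Gamma$ has no multiple edges), this pins down the label of every edge $p_ip_j$ of $\Gamma$ with $i,j\ne k$. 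Because $n\ge 3$ in the inductive step, every edge $p_ip_j$ avoids some index $k\in\{1,\dots,n\}$, so all labels are determined and (1) holds.

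It remains to prove the crucial claim, which I expect to be the main obstacle. Suppose instead that $F^{(k)}$ contained all $n+1$ fixed points. Then $F^{(k)}$ would be a $(2n-2)$-dimensional almost complex torus manifold, so by Theorem \ref{t32} its Hirzebruch genus $\chi_y(F^{(k)})=\sum_{i=0}^{n-1}a_i(F^{(k)})(-y)^i$ has all coefficients positive, hence is of degree exactly $n-1$. Applying the Kosniowski formula (Theorem \ref{t31}) to the generating circle acting on $M$, every component of its fixed set other than $F^{(k)}$ carries no $T^n$-fixed point and so has vanishing $\chi_y$ (again by Theorem \ref{t31}, via a generic subcircle with empty fixed set); thus $\chi_y(M)=(-y)^{d}\,\chi_y(F^{(k)})$ with $d=d(-,F^{(k)})\in\{0,1\}$, as $F^{(k)}$ has real codimension $2$. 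But by Theorem \ref{t42} we have $\chi_y(M)=\sum_{i=0}^n(-y)^i$, which has degree $n$ and constant term $1$: the value $d=0$ is excluded by degree, and $d=1$ by the vanishing of the constant term of $(-y)\chi_y(F^{(k)})$. This contradiction shows that $F^{(k)}$ has exactly $n$ fixed points, completing the induction and with it the proof of (1).
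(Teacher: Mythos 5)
Your proposal is correct, but for the key item (1) it takes a genuinely different route from the paper; the reduction of (2)--(9) to (1) via Remark \ref{r111} is the same in both. The paper does not induct on $n$: for each pair $i<j$ it takes the $4$-dimensional component $F_0$ of $M^{\ker w_{0,i}\cap \ker w_{0,j}}$ through $p_0$, shows $F_0$ contains no fixed point besides $p_0,p_i,p_j$ (a fourth fixed point would force some $a_m(F_0)\ge 2$ by Lemma \ref{l32}, and the Kosniowski formula for a subcircle $S\subset T$ with $M^S=M^T$ would propagate this to some $a_m(M)\ge 2$, contradicting $\chi_y(M)=\sum_{i=0}^n(-y)^i$), and then reads off $w_{i,j}=w_{0,j}-w_{0,i}$ from the localization identity $\int_{F_0}1=0$ --- that is, the computation of your base case $n=2$ is performed inside $M$ for every pair. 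You instead induct on $n$ using the codimension-two characteristic submanifolds $F^{(k)}$ and exclude the extra fixed point by a degree/constant-term comparison of $(-y)^d\chi_y(F^{(k)})$ with $\sum_{i=0}^n(-y)^i$; note that this comparison already works knowing only that $\chi_y(F^{(k)})$ is a polynomial of degree at most $n-1$, so your appeal to the positivity of the $a_i(F^{(k)})$ is harmless but not needed. Both exclusion arguments ultimately rest on the same rigidity statement $\chi_y(M)=\sum_{i=0}^n(-y)^i$ (Theorem \ref{t42}, hence Theorem \ref{t32}) fed into the Kosniowski formula. What the paper's route buys is that localization is only ever applied to $4$-dimensional pieces and no induction beyond Theorem \ref{t45} is required; what yours buys is the pleasant picture that the graph of $M$ is glued from the $\mathbb{CP}^{n-1}$-graphs of its $n$ characteristic submanifolds, mirroring the facet structure of the simplex discussed at the end of Section \ref{s4}. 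One step you should make explicit: to transfer an edge of $F^{(k)}$'s graph to an edge of $\Gamma$, observe that $a_j-a_i$ is one member of the basis of weights at $p_i$ in $M$, so the component of $M^{\ker(a_j-a_i)}$ through $p_i$ is exactly $2$-dimensional and hence coincides with the $2$-sphere supplied by the inductive hypothesis; only then does the absence of multiple edges (Proposition \ref{p29}) pin down that the edge of $\Gamma$ at $p_i$ labelled $a_j-a_i$ terminates at $p_j$.
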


\begin{figure} 
\centering
\begin{tikzpicture}[state/.style ={circle, draw}]
\node[state] (a) {$p_{0}$};
\node[state] (b) [above right=of a] {$p_i$};
\node[state] (c) [above left=of b] {$p_j$};
\path (a) [->] edge node[right] {$w_{0,i}$} (b);
\path (b) [->] edge node [right] {$w_{i,j}$} (c);
\path (a) [->] edge node [right] {$w_{0,j}$} (c);
\end{tikzpicture}
\caption{Graph describing $T^n$-action on $F_0$}\label{fig4}
\end{figure}
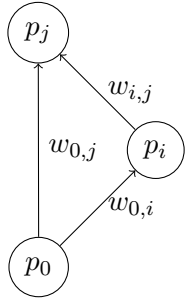

\begin{proof}
We prove (1) of the theorem; as in Remark \ref{r111}, all the others follow from (1). By Theorem \ref{t45}, (1) of the theorem holds if $\dim M \leq 4$. Therefore, from now on, we suppose that $\dim M>4$. 

Since there are only finitely many fixed points and the Euler number of a point is $1$, by Theorem \ref{t41}, $M$ has exactly $n+1$ fixed points. By Proposition \ref{p28}, there is a multigraph $\Gamma$ describing $M$ that has no multiple edges and no self-loops. We label the $n+1$ fixed points (vertices) by $p_0$, $p_1$, $\cdots$, $p_n$. Since there are $n+1$ vertices, each vertex has $n$ edges, and there are no multiple edges between any two vertices, for $i \neq j$, there is exactly one edge between $p_i$ and $p_j$. 

As in Remark \ref{r25}, by reversing the direction of an edge and the label of the edge by its negative simultaneously, if $e$ is an edge from $p_i$ to $p_j$, we may assume that $i < j$ (index increasing); the resulting multigraph also describes $M$, which we also denote by $\Gamma$. If an edge $e$ has initial vertex $p_i$ and terminal vertex $p_j$ with $i<j$, we denote the label of the edge $e$ by $w_{i,j}$. We will show that $w_{i,j}=w_{0,j}-w_{0,i}$ for $0<i<j$.

From now on we fix $0< i < j$. Let $T=\ker w_{0,i} \cap \ker w_{0,j}$; see the introduction for the definition of $\ker w_{0,i}$. Since the weights $w_{0,1}$, $\cdots$, $w_{0,n}$ at $p_0$ form a basis of $\mathbb{Z}^n$, $T$ is an $(n-2)$-dimensional subtorus of $T^n$. Let $F_0$ be a connected component of $M^T$ which contains $p_0$. Since $T$ only fixes $w_{0,i}$ and $w_{0,j}$ among the weights at $p_0$, it follows that $F_0$ is 4-dimensional and $F_0$ contains $p_0$, $p_i$, and $p_j$.

We claim that $F_0$ does not contain another fixed point. To prove this claim, assume on the contrary that $F_0$ contains another fixed point, say $p_k$, $k \neq 0,i,j$. The $T^n$-action on $M$ restricts to act on $F_0$, and the $T^n$-action on $F_0$ fixes $p_0$, $p_i$, $p_j$, and $p_k$. There can be more fixed points. Since the $T^n$-action on $F_0$ has at least 4 fixed points $p_0$, $p_i$, $p_j$, and $p_k$, applying Lemma \ref{l32} to the $T^n$-action on $F_0$, we have that $a_0(F_0)$, $a_1(F_0)$, $a_2(F_0) \geq 0$ and $a_0(F_0)+a_1(F_0)+a_2(F_0) \geq 4$, where $\chi_y(F_0)=a_0(F_0)-a_1(F_0)y+a_2(F_0)y^2$. In particular, at least one of $a_0(F_0)$, $a_1(F_0)$, and $a_2(F_0)$ is bigger than or equal to 2.

Now let $S$ be a subcircle of $T$ that has the same fixed point set as the $T$-action on $M$, i.e., $M^S=M^T$. Such an $S$ exists because $M$ is compact and there is a finite number of orbit types. 

Let $F$ be a fixed component of $M^S$. On $F$, the $T^n$-action on $M$ restricts to act, and its fixed point set is $F^{T^n}=M^{T^n} \cap F$, which consists of a finite number of fixed points. Applying Lemma \ref{l32} to the $T^n$-action on $F$, it follows that $a_m(F) \geq 0$ for all $0 \leq m \leq \frac{1}{2} \dim F$, where $\chi_y(F)=\sum_{m=0}^{\frac{1}{2} \dim F} a_m(F) \cdot (-y)^m$.

Applying Theorem \ref{t31} to the $S$-action on $M$, we have
\begin{center}
$\displaystyle \chi_y(M) = \sum_{F \subset M^S} (-y)^{d(-,F)} \cdot \chi_y(F)$,
\end{center}
where for each fixed component $F \subset M^S=M^T$, $d(-,F)$ is the number of negative $S$-weights of $F$. We have that $a_m(F) \geq 0$ for all $0 \leq m \leq \frac{1}{2} \dim F$, for all fixed components $F \subset M^S$. On the other hand, for the fixed component $F_0$ of $M^S=M^T$, at least one of $a_0(F_0)$, $a_1(F_0)$, and $a_2(F_0)$ is bigger than or equal to 2. Therefore, the coefficient of some $(-y)^m$-term of $\chi_y(M)$ must be bigger than or equal to 2. Since the Hirzebruch $\chi_y$-genus of $M$ is $\chi_y(M)=\sum_{m=0}^n (-y)^n$, we get a contradiction. Therefore, the only fixed points of the $T^n$-action on $F_0$ are $p_0$, $p_i$, and $p_j$.

The $T^n$-action on $M$ restricts to act on the 4-dimensional compact almost complex submanifold $F_0$ and this $T^n$-action on $F_0$ has fixed points $p_0$, $p_i$, and $p_j$ that have $T^n$-weights $\{w_{0,i},w_{0,j}\}$, $\{-w_{0,i}, w_{i,j}\}$, and $\{-w_{0,j},-w_{i,j}\}$, respectively (see Figure \ref{fig4}). For this $T^n$-action on $F_0$, we push-forward the equivariant cohomology class $1$ in the equivariant cohomology; for a dimensional reason ($\int_{F_0}:H_{T^n}^\ast(F_0) \to H^{\ast - \dim F_0}(BT^n)$), the image under $\int_{F_0}$ of $1$ is zero;
\begin{center}
$\displaystyle \int_{F_0} 1=0$.
\end{center}
Applying Theorem \ref{t43} to the $T^n$-action on $F_0$, taking $\alpha=1$,
\begin{center}
$\displaystyle \int_{F_0} 1=\sum_{p \in F_0^{T^n}} \frac{1}{e_{T^n}(N_pF_0)}=\sum_{p \in F_0^{T^n}} \frac{1}{e_{T^n}(T_pF_0)}=\frac{1}{w_{0,i}w_{0,j}}+\frac{1}{(-w_{0,i})w_{i,j}}+\frac{1}{(-w_{0,j})(-w_{i,j})}$.
\end{center}
Therefore, $w_{i,j}=w_{0,j}-w_{0,i}$. It follows that the graph $\Gamma$ describing $M$ agrees with the graph describing a linear $T^n$-action on $\mathbb{CP}^n$ of Example \ref{e111}, which proves (1). This completes the proof of the theorem. \end{proof}

In the remaining of this section, we discuss some interesting phenomenon that relates almost complex torus manifolds with minimal Euler number and simplices in Euclidean spaces.

Let $M$ be a $2n$-dimensional almost complex torus manifold with minimal Euler number, which is $n+1$. Then $M$ has $n+1$ fixed points, say $p_0$, $p_1$, $\cdots$, $p_n$. Proposition \ref{p28} tells us that there is a graph $\Gamma$ describing $M$. As in the proof of Theorem \ref{t46}, we may assume that any edge goes from $p_i$ to $p_j$ where $i<j$. By Proposition \ref{p29} each edge corresponds to the 2-sphere.

Now, we place $p_0$ at the origin in $\mathbb{R}^n$. Next, for each $1 \leq i \leq n$, since $w_{0,i}$ is the label of the edge from $p_0$ to $p_i$, we may place $p_i$ at the position of $w_{0,i}$ in $\mathbb{R}^n$, regarding $w_{0,i}$ as a vector in $\mathbb{R}^n$. Then, in theory, there is no need to hold that $w_{i,j}$, the label of the edge from $p_i$ to $p_j$, is precisely the difference vector $p_j-p_i$, but it does hold as we have seen in the proof of Theorem \ref{t46} that $w_{i,j}=w_{0,j}-w_{0,i}=p_j-p_i$.

Therefore, we can think of a graph describing an almost complex torus manifold $M$ with Euler number $n+1$ (which also describes the linear action on $\mathbb{CP}^n$ in Example \ref{e111}) as a simplex in $\mathbb{R}^n$ whose vertices $p_0$, $p_1$, $\cdots$, $p_n$ satisfy that $p_1-p_0$, $p_2-p_0$, $\cdots$, $p_n-p_0$ form a basis of $\mathbb{Z}^n$. In other words, an almost complex torus manifold with minimal Euler number is somehow classified by such a simplex.

\end{document}